\documentclass[12pt,reqno]{amsart}

\usepackage[hmargin=2cm,bmargin=2cm,tmargin=2.5cm]{geometry}
\usepackage{graphicx}
\usepackage{subfigure}
\usepackage{color}
\usepackage{enumerate}
\usepackage[english]{babel}
\usepackage{mathtools}
\usepackage[utf8]{inputenc}
\usepackage[T1]{fontenc}
\usepackage[b]{esvect}
\usepackage{parskip}
\usepackage{bm}
\usepackage{array}
\usepackage{arydshln}
\usepackage{multirow}
\usepackage{bigstrut}
\usepackage{bbm}
\usepackage{esdiff}
\usepackage{relsize}
\usepackage{hyperref}
\usepackage{cleveref}
\usepackage{enumitem}

\newtheorem{thm}{Theorem}[section]
\newtheorem{lem}[thm]{Lemma}
\newtheorem{prop}[thm]{Proposition}

\theoremstyle{definition}
\newtheorem{defn}{Definition}

\newtheorem{remark}[thm]{Remark}

\newcommand{\cA}{\mathcal{A}}

\newcommand{\cD}{\mathcal{D}}
\newcommand{\cE}{\mathcal{E}}

\newcommand{\cM}{\mathcal{M}}

\newcommand{\cU}{\mathcal{U}}
\newcommand{\cV}{\mathcal{V}}
\newcommand{\cW}{\mathcal{W}}

\newcommand{\bC}{\mathbb{C}}

\newcommand{\bR}{\mathbb{R}}

\newcommand{\bT}{\mathbb{T}}

\newcommand{\bZ}{\mathbb{Z}}

\newcommand{\tr}{\mathrm{tr}}

\def\up#1{^{(#1)}}

\newcommand{\vt}{{\mathfrak{v}}}

\usepackage{relsize}
\usepackage{environ}
\NewEnviron{myequation}{%
	\begin{equation*}
	\scalebox{.825}{$\BODY$}
	\end{equation*}
}
\NewEnviron{myequation1}{%
	\begin{equation*}
	\scalebox{.925}{$\BODY$}
	\end{equation*}
}
\NewEnviron{myequation2}{%
	\begin{equation*}
	\scalebox{.925}{$\BODY$}
	\end{equation*}
}

\author{Mahmood Ettehad}
\author{Burak Hat\.{i}no\u{g}lu}

\title[On the spectra of periodic discontinuous quantum graphs]{On the spectra of periodic discontinuous quantum graphs}

\date{}

\begin{document}
	\maketitle
	
	\begin{abstract}
	We consider periodic Schr\"{o}dinger operators on the hexagonal lattice with self-adjoint vertex conditions that allow discontinuity and concentrated mass at the vertices. This model generalizes the periodic Schr\"{o}dinger operator on the hexagonal lattice with Neumann vertex conditions, known as the graphene Hamiltonian quantum graph \cite{KP07}. After formulating the corresponding Hamiltonian as a quantum graph and introducing the self adjoint vertex conditions, we provide its dispersion relation, spectrum, eigenvalues, and Dirac points. 

    We also give explicit formulations of our results for the corresponding free operator (zero potential) and show that Borg's theorem is not valid for the Hamiltonians we study, that is, non-degenerate spectral gaps exist for the free operator with our vertex conditions.
    
\end{abstract}

\section{Introduction}
\label{Section1}

A quantum graph consists of a metric graph, a differential operator, and vertex conditions \cite{BK13}. The theory of quantum graphs represents a significant area of modern mathematical physics that intersects spectral theory, ordinary and partial differential equations, and discrete mathematics. These Hamiltonians model physical phenomena constrained to a metric graph using differential equations defined on the edges with conditions on the vertices \cite{BK13,KP07}. 

In this paper, we consider the Hamiltonian $\cA$ that consists of the Schr\"{o}dinger operator 
        \begin{equation}
	   -a\frac{d^2}{dx^2} + q(x)
	\end{equation}
with $a > 0$, a periodic real symmetric potential $q(x)$ on the hexagonal lattice and self-adjoint vertex conditions \eqref{eq:FBVertexCondEta1} and \eqref{eq:FBVertexCondEta2} that allow discontinuities and concentrated mass at the vertices. The vertex conditions are determined by semi-rigidity ($\kappa^{-1}$) and concentrated mass ($m$) parameters, see Definition \ref{def:semiRigid} and conditions \eqref{eq:FBVertexCondEta1} and \eqref{eq:FBVertexCondEta2}. The semi-rigidity parameter provides controlled discontinuity of solutions in terms of their derivatives as defined in \eqref{eq:semiRigid_displacement}. When the parameters $\kappa^{-1}$ and $m$ are zero, the vertex conditions become Neumann conditions, so our model is a generalization of the graphene Hamiltonian; see (2.4) in \cite{KP07}. The graphene Hamiltonian, i.e. periodic second order Schr\"{o}dinger operator on the hexagonal lattice with Neumann vertex conditions, is a fundamental operator in the quantum graphs literature that models graphene, and its spectral properties were extensively studied by Kuchment and Post \cite{KP07}.

Our main focus is the spectral properties of the Hamiltonian $\cA$. Using Floquet-Bloch theory, one obtains the spectral properties of a periodic operator from the spectral properties of the same operator defined on a compact set (fundamental domain) with additional vertex conditions (cyclic or Floquet-Bloch conditions) depending on quasi-momentum parameters $\theta_1,\theta_2$ \cite{K93,K16}, see Figure \ref{fig:periodicGeom}. The graph of the multi-valued function mapping quasi-momenta $\Theta:=\{\theta_1,\theta_2\}$ to the spectrum of the corresponding Bloch Hamiltonian $\cA^{\Theta}$ is called the dispersion relation, or the Bloch variety of $\cA$, see Subsection \ref{subsecQuasimomentum} for a discussion of $\cA^{\Theta}$ and its connection with $\cA$. In Proposition \ref{prop2}, we obtain the dispersion relation of $\cA$ and hence characterize its spectrum. In Lemma \ref{sigmaDLem}, we provide eigenvalues of $\cA$ in the cases of rigid and semi-rigid vertices.

The Hamiltonian $\cA$ is a self-adjoint operator, hence its spectrum consists of absolutely continuous, pure point, and singular continuous spectra. In Theorem \ref{thm:DispThm}, we prove a spectral description of $\cA$, including singular continuous, absolutely continuous, and pure point spectra. If two sheets of the dispersion relation of an operator touch at a point and form a conical singularity, such a point is called a Dirac point. In Proposition \ref{cor:DiracPoints} and the following remarks, we discuss Dirac points depending on the existence of mass and semi-rigidity at the vertices.

Secondly, we consider the free operator $\cA_0$ (i.e. $\cA$ with $q\equiv 0$). In Theorem \ref{thm:DispThmfree} and Proposition \ref{cor:DiracPointsFree}, we obtain representations of the dispersion relation, the spectrum, the eigenvalues and the Dirac points of $\cA_0$ in terms of the semi-rigidity ($\kappa^{-1}$) and concentrated mass ($m$) parameters, and the fundamental solutions $\cos(\sqrt{\lambda/a})$ and $\sin(\sqrt{\lambda/a})/\sqrt{\lambda/a}$, where $\lambda$ is the eigenvalue parameter.

The classical Borg's theorem says that the one dimensional periodic Schr\"{o}dinger operator has no spectral gaps if and only if the potential $q$ is constant \cite{B46}. Using Theorem \ref{thm:DispThmfree}, in Remark \ref{BorgFreeRemark} we discuss that the classical Borg's theorem is not valid for the Hamiltonian $\cA$, i.e. the free operator $\cA_0$ has non-degenerate spectral gaps if one of the of the semi-rigidity ($\kappa^{-1}$) or concentrated mass ($m$) parameters is nonzero, see Figure \ref{fig:DeltaFunc}. Remark \ref{EigenvaluesBandenpointsFree} addresses that any eigenvalue of $\cA_0$ is an embedded eigenvalue and an endpoint of a spectral band of $\cA_0$. Lemma \ref{SigmaEtaD1free} characterizes the end points of the spectrum of $\cA_0$ in terms of the parameters $a$, $\kappa^{-1}$, $m$ and the fundamental solutions $\cos(\sqrt{\lambda/a})$ and $\sin(\sqrt{\lambda/a})/\sqrt{\lambda/a}$.

The paper is structured as follows. In Section \ref{Section2}, we provide a formulation of the Hamiltonian $\cA$ by introducing semi-rigidity of vertices, variational and differential formulations, and then interpretation of the self-adjoint operator $\cA$ with its vertex conditions. In Section \ref{Section3}, we obtain spectral properties of $\cA$, namely descriptions of the dispersion relation, absolutely continuous, singular continuous, and pure point spectra, and the Dirac points. In Section \ref{Section4}, we do a detailed study of the spectral properties of the free operator $\cA_0$ and provide the results discussed above.

\begin{figure}[h]
	\centering
	\includegraphics[width=0.35\textwidth]{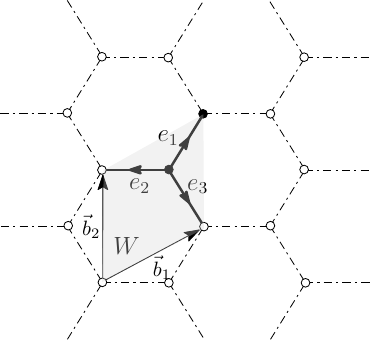}
	\caption{The hexagonal lattice $G$ and a fundamental domain $W$
		together with its set of vertices $V (W) = \{\vt_1, \vt_2\}$ and set of edges $E(W) = \{e_1, e_2,e_3\}$. The vertex set $V(W)$ of the fundamental domain $W$ consists of the endpoints of edge $e_1$.}
	\label{fig:periodicGeom}
\end{figure}

	\section{Formulation of the Hamiltonian $\cA$}
	\label{Section2}
	
	\subsection{Displacement Hamiltonian}
    Hamiltonian $\cA$ is similar to Schr\"{o}dinger operator except that jumps and concentrated masses are allowed at the vertices. In this paper, an edge $e$ adjacent to $\vt$ will be denoted by $e \sim \vt$, and, moreover, we set  
	\begin{equation}
		\label{eq:vertexLim}
		u_e(\vt) := \lim_{x\to\vt} u_{e}(x)
	\end{equation}  
     and then introduce semi-rigid vertices. Semi-rigid vertices are introduced to allow controlled discontinuity at the vertices and are motivated by the semi-rigid joints from beam models. In this paper, our goal is to consider the discontinuity and concentrated mass conditions at the vertices for a model that is well studied in the quantum graph literature \cite{BK13,KP07}. The Hamiltonian we consider in this paper is not a beam Hamiltonian, but we hope that our results can be carried out to more complicated quantum graphs that model elastic beam frames (which are given by vector-valued fourth-order operators \cite{BE21}) and other physical phenomena in future research. Therefore, even though we study a second-order Hamiltonian, we use some terminology from beam theory.   
     
     Now, let us consider how to generalize the rigid-joint model to semi-rigid. We consider the geometric graph $G = (\cV, \cE)$, consisting of the set of vertices $\cV$ and the set of edges $\cE$. Each edge $e$ provides the following information: origin and terminus vertices $\vt^{o}_e, \vt^{t}_{e} \in \cV$, and the local basis $\{\vv{i_e},\vv{j_e}\}$. We will use \textit{the sign indicator} $s^{\vt}_e$ which is defined as $1$ if $\vt = \vt^{o}_e$, $-1$ if $\vt = \vt^{t}_e$ and $0$ otherwise. This sign convention is consistent with the sign of an outward normal derivative vector of an external surface applied in the continuum mechanics literature, e.g., see \cite{ER15}. Now we are ready to define semi-rigid vertices.
 
 \begin{defn}
		\label{def:semiRigid}
		A vertex $\vt$ is called
		\textit{semi-rigid}, if for each edge $e \sim \vt$ we have
			\label{eq:semirigidVertexDefn}
			\begin{gather}
				\label{eq:semiRigid_displacement}
				u_e(\vt) + s_e^\vt a_e  \kappa^{-1} u_e'(\vt)  = u_\vt^{\hspace{0.5mm} \circ},  
			\end{gather}
   where $\kappa \in (0,\infty)$ is the rigidity parameter and $u_\vt^{\hspace{0.5mm} \circ}$ is independent of the edges adjacent to $\vt$. Note that $\kappa \rightarrow \infty$ provides rigid vertices that satisfy 
   \begin{equation}
        u_e(\vt)  = u_\vt^{\hspace{0.5mm} \circ}.
   \end{equation}
	\end{defn}
   
    \subsection{Variational and differential formulations}
	\label{sec:Section3}
    In the context of the kinematic Euler-Bernoulli assumptions for a metric graph, without any pre-stress or external force, the strain energy of the edge $e$ is expressed as
    \begin{equation}
    \label{eq:energyFunctional}
    \cU^{(e)}(x) = \frac{1}{2} \int_e \big(
    a_e(x)|u'_e(x)|^2 + q_e(x)|u_e(x)|^2 \big) dx.
    \end{equation} 
    The integration here is over the edge $e$, parameterized by the arc length $x\in[0,\ell_e]$. Throughout the remainder of the manuscript, we assume that each beam in our frame is homogeneous in the axial direction, that is, $a_e(x) \equiv a$ is independent of $x$ and $e$. Extensions of all results to variable stiffness are straightforward. The corresponding energy at the vertex set due to the discontinuity of displacement and rotation fields is a functional of the form   
    \begin{equation}
    \label{eq:vertexEnergyFunctional}
    \cU^{(\vt)} = 
    \frac{1}{2} \sum_{e \sim \vt} \|u_\vt^{\hspace{0.5mm} \circ}- u_{e}(\vt) \|^2,
    \end{equation}
    and the total energy is expressed as
	\begin{equation}
	\label{eq:energyFunctionalGraph}
		\cU\up{G} := \sum_{e \in \cE} \cU^{(e)} + \sum_{\vt \in \cV} \cU^{(\vt)}.
	\end{equation}

    \begin{thm}
		\label{MainThm}
		Energy form~\eqref{eq:energyFunctionalGraph} with free semi-rigid vertices corresponds to the self-adjoint operator 
        $$\cA \colon L^2(G) \to L^2(G)$$ 
        with compact resolvent, acting as
		\begin{equation}
			\label{diffSystem}
			\Big( (u_e)_{e \in \cE} , (u_\vt^{\hspace{0.5mm}\circ})_{\vt \in \cV}\Big)
			~\mapsto~
			\Big( (-a u_e'' + q_e u_e)_{e \in \cE} ,(\mathfrak{m}_\vt^{-1} F_\vt )_{\vt \in \cV})\Big)
		\end{equation}
        on the set of edges $e \in \cE$ and vertices $\vt \in \cV$ of the graph $G$, where $F_{\vt}$ represents the net force at vertex $\vt$, i.e.
       \begin{equation}
		\label{eq:FM}
			 F_\vt = \displaystyle\sum_{e \sim \vt} s_e^\vt a u_e'.
		\end{equation}
         The domain of the operator $\cA$ consists of the functions $(u, \vec u_\vt^{\hspace{0.5mm}\circ})$ that belong to 
		\begin{equation}
		H_\cA(G) := 
		\prod_{e \in \cE} H^2(e)
		\times \prod_{\vt \in \cV} \bC^2(\vt)
		\end{equation}
		that satisfy at each vertex $\vt$ and for all $e \sim \vt$ non-homogeneous Robin conditions 
        \begin{equation}
            u_e(\vt) + s_e^\vt a \kappa^{-1} u_e'(\vt)  = u_\vt^{\hspace{0.5mm} \circ}
        \end{equation}
        defined in \eqref{eq:semiRigid_displacement} as the semi-rigidity condition.
	\end{thm}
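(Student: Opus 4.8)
The plan is to recognize this as a standard ``energy-form meets operator'' statement and to prove it by the classical route: write down the closed, bounded-below sesquilinear form associated with \eqref{eq:energyFunctionalGraph}, invoke the representation theorem, and then compute the action and domain of the representing operator by integration by parts. First I would fix the form domain. The natural choice is
\begin{equation*}
	H^1(G) := \Big\{ (u,\vec u_\vt^{\hspace{0.5mm}\circ}) \in \prod_{e\in\cE} H^1(e) \times \prod_{\vt\in\cV}\bC^2(\vt) \Big\},
\end{equation*}
equipped with the form
\begin{equation*}
	\mathfrak{a}\big[(u,\vec u_\vt^{\circ})\big] = \sum_{e\in\cE}\int_e\big(a|u_e'|^2 + q_e|u_e|^2\big)\,dx \;+\; \sum_{\vt\in\cV}\sum_{e\sim\vt}\big|u_\vt^{\hspace{0.5mm}\circ} - u_e(\vt)\big|^2,
\end{equation*}
the factor $\tfrac12$ being irrelevant for the operator association. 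One checks that $\mathfrak{a}$ is symmetric, that $u\mapsto u_e(\vt)$ is bounded on $H^1(e)$ (Sobolev trace in one dimension), hence the vertex penalty term is form-bounded with relative bound zero, and that with $q\in L^\infty$ (or even $q\in L^1$ on each edge) the form is semibounded and closed on $H^1(G)$; since $G$ is compact the embedding $H^1(G)\embeds L^2(G)$ is compact, which will give the compact resolvent.

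Next I would apply the first representation theorem to produce a self-adjoint operator $\cA$ with $\Dom(\cA)\inc H^1(G)$ characterized by: $(u,\vec u_\vt^{\circ})\in\Dom(\cA)$ iff there is $(f,\vec f_\vt)\in L^2(G)$ with $\mathfrak{a}[(u,\vec u_\vt^{\circ}),(v,\vec v_\vt^{\circ})] = \langle (f,\vec f_\vt),(v,\vec v_\vt^{\circ})\rangle$ for all test elements. Plugging in $v$ supported on a single edge and vanishing near the vertices forces $u_e\in H^2(e)$ and $f_e = -a u_e'' + q_e u_e$ in the distributional sense, so the edge action is identified. Then I would integrate by parts keeping the vertex terms: for general $v$ the boundary contributions $\sum_{\vt}\sum_{e\sim\vt} s_e^\vt a u_e'(\vt)\overline{v_e(\vt)}$ must be matched against the vertex penalty $\sum_\vt\sum_{e\sim\vt}(u_\vt^{\circ}-u_e(\vt))(\overline{v_\vt^{\circ}}-\overline{v_e(\vt)})$ and against $\sum_\vt \mathfrak{m}_\vt f_\vt \overline{v_\vt^{\circ}}$. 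Collecting the coefficients of the independent quantities $\overline{v_e(\vt)}$ and $\overline{v_\vt^{\circ}}$ yields, at each vertex, two families of identities: one equating $s_e^\vt a u_e'(\vt)$ with $u_\vt^{\circ}-u_e(\vt)$ (the edge-side relation), and one equating $\sum_{e\sim\vt}(u_\vt^{\circ}-u_e(\vt))$ with $\mathfrak{m}_\vt f_\vt$ (the vertex-side relation). The first, after multiplying by $s_e^\vt$ and using $(s_e^\vt)^2=1$, is exactly the semi-rigidity condition \eqref{eq:semiRigid_displacement} with $\kappa^{-1}$ identified in terms of the penalty weight; substituting it back into the second gives $\mathfrak{m}_\vt f_\vt = \sum_{e\sim\vt} s_e^\vt a u_e'(\vt) = F_\vt$, i.e. the vertex component of \eqref{diffSystem}.

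The care point, and where I expect the bookkeeping to be delicate rather than deep, is the sign and weight accounting at the vertices: one must track $s_e^\vt$ through the integration by parts, make sure the limit $u_e(\vt)$ in \eqref{eq:vertexLim} is taken consistently with the edge orientation, and see that the penalty weight in \eqref{eq:vertexEnergyFunctional} produces precisely the factor $a\kappa^{-1}$ in \eqref{eq:semiRigid_displacement} (this pins down how the written form, which has unit weight, should be read, or else one inserts the weight $\kappa/a$ into the penalty term so the constants come out as stated). A secondary subtlety is the role of $\mathfrak{m}_\vt$: to get $\mathfrak{m}_\vt^{-1}F_\vt$ as the vertex action one should take the $L^2$ inner product on the $\bC^2(\vt)$ factor to be weighted by $\mathfrak{m}_\vt$ (the concentrated mass), which is the natural choice and should be stated; then self-adjointness, semiboundedness, and compact resolvent are automatic from the representation theorem together with compactness of $G$. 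I would close by remarking that the non-homogeneous Robin form of the vertex condition is simply \eqref{eq:semiRigid_displacement} read as a constraint relating $u_e(\vt)$, $u_e'(\vt)$ and the auxiliary unknown $u_\vt^{\circ}$, so that $\Dom(\cA)$ is exactly the stated subspace $H_\cA(G)$.
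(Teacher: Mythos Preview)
The paper does not actually prove this theorem: after stating it at the end of Section~2.2, the authors pass directly to Section~2.3 and simply use the operator and vertex conditions as given. So there is no argument in the paper to compare against, and your proposal is already more than the authors supply.

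Your route via closed semibounded forms and Kato's first representation theorem is the standard and correct one for statements of this type, and the computation you sketch (compactly supported test functions on a single edge to force $u_e\in H^2(e)$ and identify the edge action, then full test functions and integration by parts to read off the two families of vertex identities) is exactly what is needed. You are also right to flag the two bookkeeping issues that the paper leaves implicit: the vertex penalty in \eqref{eq:vertexEnergyFunctional} must carry a weight (namely $\kappa/a$) for the edge-side identity to reproduce the factor $a\kappa^{-1}$ in \eqref{eq:semiRigid_displacement}, and the $\bC^2(\vt)$ component of $L^2(G)$ must carry the mass weight $\mathfrak{m}_\vt$ for the vertex action $\mathfrak{m}_\vt^{-1}F_\vt$ to emerge. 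One caution: your compact-resolvent argument assumes $G$ is compact, but in this paper $G$ is the infinite hexagonal lattice, for which the resolvent is certainly not compact (indeed the rest of the paper exhibits absolutely continuous spectrum). The compact-resolvent assertion in the theorem statement is therefore either a slip or is meant to apply only after restriction to the fundamental domain $W$ with Floquet conditions; your argument is correct for the latter reading.
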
	

    \subsection{Interpretation of the self-adjoint operator}\label{subsecQuasimomentum}
     The eigenvalue equation of the self-adjoint operator in Theorem \ref{MainThm} reduces to
	\begin{equation}
	\label{eq:eigenProblemEta}
	\cA u_e(x) :=  -a u_e''(x) + q(x) u(x)= \lambda u_e(x)
	\end{equation} 
	subjected to the vertex conditions of 
        \begin{enumerate}
            \item \textit{semi-rigidity conditions}
	\begin{equation}
	\label{eq:FBVertexCondEta1}
		u_e(\vt) + s_e^{\vt} a \kappa^{-1} u_e'(\vt) = 
		u_{e'}(\vt) + s_{e'}^{\vt} a \kappa^{-1} u_{e'}'(\vt)
	\end{equation}
        for any pair $e,e' \sim \vt$, where $\kappa \in (0,\infty)$ and 
        \item \textit{net-force balance conditions}
	\begin{equation}
	\label{eq:FBVertexCondEta2}
		\displaystyle\sum_{e \sim \vt} s_e^{\vt} a u_e'(\vt)  = \lambda m \big(u_{e'}(\vt) + s_{e'}^{\vt} a \kappa^{-1} u_{e'}'(\vt)\big)
	\end{equation}
        for any $e' \sim \vt$, where $m \in (0,\infty)$.
        \end{enumerate}
	We note that in \eqref{eq:FBVertexCondEta2}, the right-hand side is the same for any $e' \sim \vt$, since the equality \eqref{eq:FBVertexCondEta1} holds. Moreover, one may notice that the spectrum is determined by the dispersion relation, and thus when the latter is known, the former can be determined as well.
    
    Floquet-Bloch theory reduces the study of Hamiltonian $\cA$ to the study of the Bloch Hamiltonian family $\cA^{\Theta}$ acting in $L^2(W)$ for the values of the \textit{quasimomentum} $\Theta = (\theta_1,\theta_2)$ in the Brillouin zone $\bT^2$. Here, the Bloch Hamiltonian $\cA^{\Theta}$ acts the same way $\cA$ does, but it is applied to a different space of functions.  Each function $u = \{u_e\}_{e \in E}$ in the domain of $\cA^{\Theta}$ must belong to the Sobolev space $u_e \in H^2(e)$ on each edge $e$ and satisfy the vertex conditions \eqref{eq:FBVertexCondEta1} and \eqref{eq:FBVertexCondEta2} as well as the cyclic conditions (Floquet-Bloch conditions)
	\begin{equation}
		\label{eq:FBThm}
		u(x + n_1\vec{b}_1+n_2\vec{b}_2)=  e^{i(n_1\theta_1+n_2\theta_2)}u(x) 
	\end{equation}
	for any $n_1, n_2 \in \bZ$ and any $x \in G$. Due to conditions \eqref{eq:FBThm}, function $u$ is uniquely determined by its restriction to the fundamental domain $W$. Then by a parametrization of $e_1$ as $[0,1]$ (and hence $\vt_1$ as $0$ and $\vt_2$ as $1$), conditions \eqref{eq:FBVertexCondEta1} and \eqref{eq:FBVertexCondEta2} at $\vt_1$ reduce to
	\begin{gather}
	u_1(0) - a \kappa^{-1} u_1'(0) = u_2(0) - a\kappa^{-1} u_2'(0) = u_3(0) - a\kappa^{-1} u_3'(0)  =: \omega_{0}, \label{leftcond1} \\
	-a(u_1'(0) + u_2'(0) + u_3'(0)) =  \lambda m  \omega_{0}. \label{leftcond2}
	\end{gather}
	Similarly, at vertex $\vt_2$ we get
	\begin{gather}
	u_1(1) + a\kappa^{-1} u_1'(1) = \big(u_2(1) + a\kappa^{-1} u_2'(1)\big) e^{i \theta_1} =  \big(u_3(1)+a\kappa^{-1} u_3'(1)\big) e^{i \theta_2}  =: \omega_{1}, \label{vertexv1cond1} \\
	a(u_1'(1) + u_2'(1)e^{i \theta_1}  + u_2'(1)e^{i \theta_2})  = \lambda m \omega_{1}. \label{vertexv1cond2}
	\end{gather}
	By standard arguments (see, e.g., \cite{K16}), $\cA^{\Theta}$ has a discrete spectrum $\sigma(\cA^{\Theta}) = {\lambda_j(\Theta)}$. The graph of the multi-valued function $\Theta \mapsto \{\lambda_j(\Theta)\}$ is known as the \textit{dispersion relation}, or \textit{Bloch variety} of the operator $\cA$. It is known that the range of this function is the spectrum of $\cA$:
	\begin{equation}
	\sigma(\cA) = \bigcup_{\Theta \in [-\pi,\pi]^2}\sigma(\cA^{\Theta}).
	\end{equation}
	Our goal now is to determine the spectrum of $\cA^{\Theta}$ and thus the dispersion relation of $\cA$. In order to determine this spectrum, we have to solve the eigenvalue problem
	\begin{equation}\label{2ndOrderOp}
	\cA^{\Theta}\Psi(x) = \lambda \Psi(x) 
	\end{equation}
	for $\lambda \in \mathbb{R}$ and non-trivial function $u_e(x) \in L_e^2(W)$ satisfying the above boundary conditions. Let us denote by $\Sigma_0$ the spectrum of operator $\cA$ on the interval $(0,1)$ with boundary conditions 
        \begin{align}
	u(0) - a\kappa^{-1}u'(0) &= 0, \label{eq:boundCondD0} \\
        u(1) + a\kappa^{-1}u'(1) &= 0. \label{eq:boundCondD1}
	\end{align}
	For $\lambda \notin \Sigma_0$, there exist two linearly independent solutions of the eigenvalue problem on $(0,1)$, denoted by $\psi_1(x)$ and $\psi_2(x)$ and depending on both $\lambda$ and $\kappa$ such that at boundaries they satisfy
	\begin{equation}
	\label{eq:etaIndSol}
	\begin{split}
	\psi_1(0) - a\kappa^{-1}\psi_1'(0) = 1, \qquad  \psi_1(1) + a\kappa^{-1}\psi_1'(1) = 0,\\
	\psi_2(0) - a\kappa^{-1}\psi_2'(0) = 0,\qquad \psi_2(1) + a\kappa^{-1}\psi_2'(1) = 1.
	\end{split}
	\end{equation}
	Linearly independent solutions satisfying \eqref{eq:etaIndSol} then admit representation of any solution $u$ of \eqref{eq:eigenProblemEta} from the domain of $\cA^\theta$ on each edge in $W$ and satisfying conditions \eqref{leftcond1} and \eqref{vertexv1cond1} as 
	\begin{align}
	&u_1(x) = \omega_{0} \psi_1(x) + \omega_{1} \psi_2(x), \label{u1cond} \\
	&u_2(x) = \omega_{0} \psi_1(x) + e^{-i\theta_1}\omega_{1} \psi_2(x), \label{u2cond} \\
	&u_3(x) = \omega_{0} \psi_1(x) + e^{-i\theta_2}\omega_{1} \psi_2(x). \label{u3cond}
	\end{align}
	Let us introduce the function
    \begin{equation}\label{Sfunction}
        S(\theta_1,\theta_2):= 1+e^{-i\theta_1}+e^{-i\theta_2}
    \end{equation}
    for $\theta_1, \theta_2 \in [-\pi,\pi]$.
    Then using \eqref{u1cond}, \eqref{u2cond} and \eqref{u3cond} in \eqref{leftcond2} and \eqref{vertexv1cond2} we get the system 
	\begin{equation}
	\label{eq:linearSystem}
	\begin{split}
	3\psi_1'(0) \omega_{0}+S(\theta_1,\theta_2) \psi_2'(0) \omega_{1} &= -\lambda m a^{-1} \omega_{0}, \\
	3\psi_2'(1) \omega_{1} + \overline{ S(\theta_1,\theta_2)} \psi_1'(1) \omega_{0} &= \lambda ma^{-1} \omega_{1}.
	\end{split}
	\end{equation}
	This relation can be further simplified by imposing the symmetry of the potential which implies 
	\begin{equation}\label{symemtryconditions}
	\psi'_1(0) = -\psi'_2(1), \qquad 	\psi'_1(1) = -\psi'_2(0),
	\end{equation}
	so \eqref{eq:linearSystem} reduces to finding the coefficient vector $(\omega_{0}, \omega_{1})^{\text{T}}$  in the kernel of the $2\times2$ matrix $\mathbb{M}(\lambda;\kappa^{-1},m)$ defined as
	\begin{equation}\label{coefmatrix}
	\begin{pmatrix}
		3\psi'_2(1) - \lambda m a^{-1} & - S(\theta_1,\theta_2)  \psi'_2(0) \\
		-\overline {S(\theta_1,\theta_2)} \psi'_2(0) &3\psi'_2(1) - \lambda m a^{-1}
	\end{pmatrix}.
	\end{equation}
	A non-trivial solution exists if and only if the matrix \eqref{coefmatrix} is singular, which is equivalent to a vanishing determinant as
	\begin{equation}
	\big(3\psi'_2(1) -\lambda m a^{-1}\big)^2 = (\psi'_2(0))^2 |S(\theta_1,\theta_2)|^2
	\end{equation}
	We summarize this result in the following proposition. 
	\begin{prop}
		\label{prop1}
		Let $\lambda \not \in \Sigma_0$, where $\Sigma_0$ is defined by \eqref{eq:boundCondD0} and \eqref{eq:boundCondD1}. Then $\lambda$ is in spectrum of the Hamiltonian $\cA$ if and only if there is $(\theta_1,\theta_2) \in [-\pi,\pi]^2$ such that 
		\begin{equation}
		\label{eq:PropResultDisp}
		\frac{\psi'_2(1) }{\psi'_2(0)} - \frac{\lambda m a^{-1}}{3\psi'_2(0)} = \pm \frac{|S(\theta_1,\theta_2)|}{3}.
		\end{equation}
	\end{prop}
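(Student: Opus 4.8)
The proof is mostly a matter of threading together the computations made just above the statement, so the plan is to make the logical chain precise. First I would invoke the Floquet--Bloch decomposition $\sigma(\cA)=\bigcup_{\Theta\in[-\pi,\pi]^2}\sigma(\cA^{\Theta})$, so that for a fixed real $\lambda\notin\Sigma_0$ the condition $\lambda\in\sigma(\cA)$ is equivalent to: there is some $\Theta=(\theta_1,\theta_2)$ for which the Bloch eigenvalue problem \eqref{2ndOrderOp} admits a nontrivial solution on $W$ obeying the four vertex conditions \eqref{leftcond1}--\eqref{vertexv1cond2} (these already encode the cyclic conditions \eqref{eq:FBThm}). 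The next step is to record why $\lambda\notin\Sigma_0$ yields existence, uniqueness and linear independence of $\psi_1,\psi_2$ from \eqref{eq:etaIndSol}: the boundary functionals $L_0[u]:=u(0)-a\kappa^{-1}u'(0)$ and $L_1[u]:=u(1)+a\kappa^{-1}u'(1)$ are linear forms on the two-dimensional solution space of $-au''+qu=\lambda u$, and they are linearly independent exactly when $\lambda\notin\Sigma_0$ (otherwise their common kernel contains an eigenfunction for the conditions \eqref{eq:boundCondD0}--\eqref{eq:boundCondD1}); hence $u\mapsto(L_0[u],L_1[u])$ is a linear isomorphism onto $\bC^2$, and $\psi_1,\psi_2$ are the preimages of the standard basis.

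With $\psi_1,\psi_2$ at hand I would then note that on $W$ every solution of \eqref{2ndOrderOp} satisfying the semi-rigidity conditions at $\vt_1$ and $\vt_2$ and the cyclic conditions is of the form \eqref{u1cond}--\eqref{u3cond} for a unique $(\omega_0,\omega_1)\in\bC^2$: the solution space on the three edges is six-dimensional, conditions \eqref{leftcond1} and \eqref{vertexv1cond1} each impose two linear constraints, and \eqref{u1cond}--\eqref{u3cond} is a two-parameter family sitting inside the resulting set and genuinely two-dimensional because $\psi_1,\psi_2$ are independent, so the two coincide; such a solution is nontrivial iff $(\omega_0,\omega_1)\neq0$. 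Substituting \eqref{u1cond}--\eqref{u3cond} into the remaining net-force conditions \eqref{leftcond2} and \eqref{vertexv1cond2}, and writing $1+e^{-i\theta_1}+e^{-i\theta_2}$ and $1+e^{i\theta_1}+e^{i\theta_2}$ as $S(\theta_1,\theta_2)$ and $\overline{S(\theta_1,\theta_2)}$ from \eqref{Sfunction}, yields the linear system \eqref{eq:linearSystem}. The symmetry of the potential, $q(x)=q(1-x)$, applied via uniqueness to $x\mapsto\psi_2(1-x)$, gives the relations \eqref{symemtryconditions}, which turn \eqref{eq:linearSystem} into the assertion that $(\omega_0,\omega_1)^{\mathrm T}$ lies in the kernel of the matrix $\mathbb{M}(\lambda;\kappa^{-1},m)$ of \eqref{coefmatrix}.

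To finish, such a kernel vector is nonzero for some $\Theta$ iff $\det\mathbb{M}=0$, i.e. $\bigl(3\psi_2'(1)-\lambda m a^{-1}\bigr)^2=|S(\theta_1,\theta_2)|^2(\psi_2'(0))^2$. I would then observe $\psi_2'(0)\neq0$: by \eqref{eq:etaIndSol}, $\psi_2'(0)=0$ forces $\psi_2(0)=a\kappa^{-1}\psi_2'(0)=0$ and hence $\psi_2\equiv0$ by uniqueness, contradicting $\psi_2(1)+a\kappa^{-1}\psi_2'(1)=1$. Dividing by $9(\psi_2'(0))^2$ and taking square roots then gives exactly \eqref{eq:PropResultDisp}, the $\pm$ recording the two roots (and $|S(\theta_1,\theta_2)|/3$ ranging over $[0,1]$ as $\Theta$ varies); chaining the equivalences back through the Floquet--Bloch union completes the argument. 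I expect no deep obstacle: the points that require care are the dimension count ensuring the ansatz \eqref{u1cond}--\eqref{u3cond} misses no Bloch eigenfunction, the derivation of \eqref{symemtryconditions} from the symmetry of $q$, and the non-vanishing of $\psi_2'(0)$ that legitimizes the final division.
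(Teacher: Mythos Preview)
Your proposal is correct and follows exactly the approach of the paper, which presents the proof as the running computation immediately preceding the proposition (Floquet--Bloch reduction, the ansatz \eqref{u1cond}--\eqref{u3cond}, the system \eqref{eq:linearSystem}, the symmetry relations \eqref{symemtryconditions}, and the vanishing-determinant condition). You actually supply more detail than the paper does---the dimension count justifying that the ansatz exhausts the Bloch eigenfunctions, the derivation of \eqref{symemtryconditions} via $x\mapsto\psi_2(1-x)$, and the argument that $\psi_2'(0)\neq0$---all of which the paper either asserts or relegates to a later remark.
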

	\begin{remark}
		Applying the property  that $\psi_2'(0) \not= 0$, condition \eqref{eq:PropResultDisp} for case $m = 0$ reduces to
		\begin{equation}
		\Big(\frac{\psi'_2(1) }{\psi'_2(0)} - \frac{|S(\theta_1,\theta_2)|}{3}\Big) \Big(\frac{\psi'_2(1) }{\psi'_2(0)} + \frac{|S(\theta_1,\theta_2)|}{3}\Big) = 0.
		\end{equation}
		This then has a similar form as the  Schr\"{o}dinger Hamiltonian on graphene reported in \cite{KP07} in which the dispersion relation contains Dirac points (conical singularities) representing one of the most interesting features of such structures. These singularities resemble Dirac spectra for massless fermions and thus lead to unusual physical properties of graphene. We will describe the Dirac points of $\cA$ in the next section.
	\end{remark}

     \begin{figure}[h]
		\centering
		\includegraphics[width=0.41\textwidth]{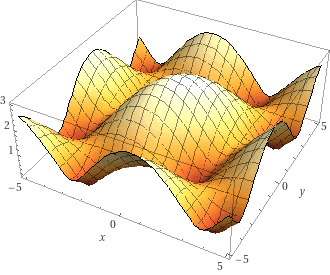} \qquad \qquad
		\includegraphics[width=0.33\textwidth]{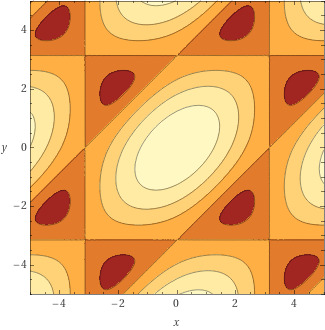}
        \caption{Plot and level curves of $|S(x,y)|$ defined in \eqref{Sfunction}.}
		\label{fig:SFunc}
	\end{figure}


        \section{Spectral analysis of the Hamiltonian $\cA$}
	\label{Section3}
 
	\subsection{\textbf{The dispersion relation of $\cA$}}
    
	Next, we want to interpret the functions $\psi'_2(1)/\psi'_2(0)$ and $1/\psi'_2(0)$ in terms of the original potential $q_0(x)$ on $[0, 1]$ using fundamental solutions. To this end, let us periodically extend $q_0$ to the whole real axis $\bR$ and consider the periodic operator $\cA^{\text{per}}$ in $\bR$ given as in \eqref{eq:eigenProblemEta} with the resulting periodic potential:
	\begin{equation}
	\label{eq:brokenHill}
	\cA^{\text{per}} u(x) = -a u''(x) + q_0(x)u(x).
	\end{equation}
	Without loss of generality, we assume $\cV = \bZ$. We introduce the \textit{jump operator} on $\vt \in \cV$ acting as 
	\begin{equation}
	\cM_{x_0}\big[g(x)\big] := \frac{1}{2}\Big(\lim_{x \rightarrow x_0^+}g(x) + \lim_{x \rightarrow x_0^-}g(x)\Big).
	\end{equation}
	Note that a possible discontinuity occurs only at the vertices, so if $x_0 \in G$ is not a vertex, then $$ \cM_{x_0}\big[g(x)\big] = g(x_0). $$
    Next, we introduce the fundamental solutions $c_{\lambda}$ and $s_{\lambda}$ of \eqref{eq:brokenHill} satisfying 
	\begin{equation}
	\label{eq:jumpVertexConds}
	\begin{split}
	&\cM_{0}\big[c_{\lambda}(x)\big] = 1, \qquad \cM_{0}\big[s_{\lambda}(x)\big] = 0\\
	&\cM_{0}\big[c_{\lambda}'(x)\big] = 0, \qquad \cM_{0}\big[s_{\lambda}'(x)\big] = 1
	\end{split}
	\end{equation}
	The monodromy matrix has the form for $x \in \bR$
	\begin{equation}
	\label{eq:monodMatrix}
	    \begin{pmatrix}
	    \cM_{1}\big[c_{\lambda}(x)\big] & \cM_{1}\big[s_{\lambda}(x)\big]\\
            \cM_{1}\big[c_{\lambda}'(x)\big] & \cM_{1}\big[s_{\lambda}'(x)\big]
	\end{pmatrix}
	\end{equation}
	and it shifts by the period along the solutions of \eqref{eq:brokenHill}. The monodromy matrix $M(\lambda)$ is an entire function of $\lambda \in \mathbb{C}$ and maps $\mathbb{R}$ to $\mathbb{R}$.	
 
        Our goal is to obtain solutions $\psi_1$ and $\psi_2$ that we defined by conditions \eqref{eq:etaIndSol} in terms of $c_{\lambda}$ and $s_{\lambda}$, and hence get the dispersion relation for $\cA$ in terms of the fundamental solution using Proposition \ref{prop1}. 
        
	Now for $x_0 \in [0,1]$, writing 
	\begin{align}
	\cM_{x_0}\big[c_{\lambda}(x)\big] &= \alpha_c \lim_{x \rightarrow x_0} \psi_1(x) + \beta_c \lim_{x \rightarrow x_0} \psi_2(x) \label{limit1} \\
        \cM_{x_0}\big[s_{\lambda}(x)\big] &= \alpha_s \lim_{x \rightarrow x_0} \psi_1(x) + \beta_s \lim_{x \rightarrow x_0} \psi_2(x) \label{limit2}
	\end{align}
	and applying the properties \eqref{eq:jumpVertexConds} to find the coefficients of \eqref{limit1}, we get
    \begin{align}
        1 &= \alpha_c \psi_1(0) + \beta_c \psi_2(0)\\
        0 &= \alpha_c\psi_1'(0) + \beta_c \psi_2'(0)
    \end{align}
    and then using conditions \eqref{eq:boundCondD0} and \eqref{eq:boundCondD1}, we have
    \begin{align}
        1 &= \alpha_c [1+a\kappa^{-1}\psi_1'(0)] + \beta_c a\kappa^{-1}\psi_2'(0)\\
        0 &= \alpha_c\psi_1'(0) + \beta_c \psi_2'(0)
    \end{align}
    and finally solving this system and recalling the symmetry properties \eqref{symemtryconditions} we obtain
    \begin{equation}\label{Mcoef1}
       \cM_x[c_{\lambda}(x)] = \psi_1(x) + \frac{\psi_2'(1)}{\psi_2'(0)}\psi_2(x).
    \end{equation}
    Similarly, applying the properties \eqref{eq:jumpVertexConds} to find the coefficients of \eqref{limit2}, we get 
     \begin{align}
        0 &= \alpha_s \psi_1(0) + \beta_s \psi_2(0)\\
        1 &= \alpha_s\psi_1'(0) + \beta_s \psi_2'(0)
    \end{align}
    and then using conditions \eqref{eq:boundCondD0} and \eqref{eq:boundCondD1}, we have
    \begin{align}
        0 &= \alpha_c [1+a\kappa^{-1}\psi_1'(0)] + \beta_c a\kappa^{-1}\psi_2'(0)\\
        1 &= \alpha_c\psi_1'(0) + \beta_c \psi_2'(0)
    \end{align}
    and finally solving this system and recalling the symmetry properties \eqref{symemtryconditions} we obtain
    \begin{equation}\label{Mcoef2}
       \cM_x[s_{\lambda}(x)] = -a\kappa^{-1}\psi_1(x) + \frac{1-a\kappa^{-1}\psi_2'(1)}{\psi_2'(0)}\psi_2(x).
    \end{equation}

	  Note that at the endpoints, the limits in \eqref{limit1} and \eqref{limit2} are one-sided. Now, using \eqref{Mcoef1}, we observe that 
	\begin{equation}\label{M1g1coef}
	\cM_{1}\big[c_{\lambda}(x)\big]+a\kappa^{-1} \cM_{1}\big[c_{\lambda}'(x)\big] = \big(\underbrace{\psi_1(1)+a\kappa^{-1} \psi_1'(1)}_{=~0}\big) + \frac{\psi_2'(1)}{\psi_2'(0)} \big(\underbrace{\psi_2(1)+a\kappa^{-1} \psi_2'(1)}_{=~1}\big) = \frac{\psi_2'(1)}{\psi_2'(0)}
	\end{equation}
	and similarly, 
	\begin{align}\label{M1g2coef}
	&\cM_{1}\big[s_{\lambda}(x)\big]+a\kappa^{-1} \cM_{1}\big[s_{\lambda}'(x)\big]\\
    &=-a\kappa^{-1}\big(\underbrace{\psi_1(1)-a\kappa^{-1} \psi_1'(1)}_{=~0}\big) + \frac{[1-a\kappa^{-1}\psi_2'(1)]\psi_2'(1)}{\psi_2'(0)} \big(\underbrace{\psi_2(1)+a\kappa^{-1} \psi_2'(1)}_{=~1}\big)\\
    &= \frac{1-a\kappa^{-1}\psi_2'(1)}{\psi_2'(0)}.
	\end{align}
	We remark that any solution $g(x)$ to the eigenvalue problem $\cA g = \lambda g$ can be written as a linear combination of $c_{\lambda}(x)$ and $s_{\lambda}(x)$ since the Wronskian at a vertex, let us say $x_n = 0$, satisfies
	\begin{equation}
	\cW_{0}(c_{\lambda},s_{\lambda}) = 
	\cM_{0}\big[c_{\lambda}(x)\big]\cM_{0}\big[s_{\lambda}'(x)\big] - \cM_{0}\big[s_{\lambda}(x)\big]\cM_{0}\big[c_{\lambda}'(x)\big] = 1
	\end{equation}
	and is non-vanishing. 
    
    Next, we formalize the result in Proposition \ref{prop2}. Before that, let us introduce the notation
	\begin{equation}
    \label{GMatrix}
	\mathbb{M} := 
	\begin{pmatrix}
	\cM_{1}\big[c_{\lambda}(x)\big]  & \cM_{1}\big[s_{\lambda}(x)\big] \\
	\cM_{1}\big[c_{\lambda}'(x)\big] & \cM_{1}\big[s_{\lambda}'(x)\big]
	\end{pmatrix}
	\end{equation}
	for the monodromy matrix and the unit vectors
    \begin{equation}
        \vv{e}_1 = \begin{pmatrix}
            1\\
            0
        \end{pmatrix} \qquad \text{and} \qquad \vv{e}_2 = \begin{pmatrix}
            0\\
            1
        \end{pmatrix}.
    \end{equation}
	\begin{prop}
		\label{prop2}
		If $\lambda \not \in \Sigma_0$, then $\lambda$ is in spectrum of the Hamiltonian $\cA$ if and only if there is $(\theta_1,\theta_2) \in [-\pi,\pi]^2$ such that 
		\begin{equation}
		\label{eq:PropResultDisp2}
		\Big[(1~a\kappa^{-1}) \mathbb{M}\vv{e}_1\Big] - \frac{\lambda m}{3a} \Big[(1~a\kappa^{-1}) \mathbb{M}\begin{pmatrix}
		    a\kappa^{-1}\\
            1
		\end{pmatrix}\Big] = \pm \frac{|S(\theta_1,\theta_2)|}{3},
		\end{equation}
        where $\mathbb{M}$, $\Sigma_0$ and $S(\theta_1,\theta_2)$ are defined in \eqref{GMatrix}, \eqref{eq:boundCondD0} and \eqref{eq:boundCondD1}, and \eqref{Sfunction} respectively.
	\end{prop}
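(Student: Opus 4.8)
The plan is to deduce \eqref{eq:PropResultDisp2} directly from the dispersion relation \eqref{eq:PropResultDisp} of Proposition \ref{prop1} by identifying the two matrix--vector expressions on the left-hand side of \eqref{eq:PropResultDisp2} with the ratios $\psi_2'(1)/\psi_2'(0)$ and $1/\psi_2'(0)$. Since Proposition \ref{prop1} already characterizes, for $\lambda\notin\Sigma_0$, the membership $\lambda\in\sigma(\cA)$ in terms of exactly these two quantities and the function $|S(\theta_1,\theta_2)|$, once this identification is established there is nothing left to prove.

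First I would note that
\[
(1~a\kappa^{-1})\,\mathbb{M}\,\vv{e}_1 = \cM_1\big[c_\lambda(x)\big] + a\kappa^{-1}\,\cM_1\big[c_\lambda'(x)\big],
\]
which is precisely the combination evaluated in \eqref{M1g1coef}, so this bracket equals $\psi_2'(1)/\psi_2'(0)$. The identity \eqref{M1g1coef} itself comes from the representation \eqref{Mcoef1} of $\cM_x[c_\lambda(x)]$ in the basis $\{\psi_1,\psi_2\}$, which was obtained from the normalization \eqref{eq:jumpVertexConds}, the boundary conditions \eqref{eq:boundCondD0}--\eqref{eq:boundCondD1} defining $\Sigma_0$, and the symmetry relations \eqref{symemtryconditions}; here one uses that $\psi_2'(0)\neq 0$ for $\lambda\notin\Sigma_0$, which is what makes the small linear systems for $\alpha_c,\beta_c$ (and hence every displayed ratio) solvable.

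Next I would compute the second bracket. Expanding $\mathbb{M}\begin{pmatrix}a\kappa^{-1}\\ 1\end{pmatrix}$ column-wise and applying the row vector $(1~a\kappa^{-1})$ gives
\[
a\kappa^{-1}\big(\cM_1\big[c_\lambda(x)\big] + a\kappa^{-1}\cM_1\big[c_\lambda'(x)\big]\big) + \big(\cM_1\big[s_\lambda(x)\big] + a\kappa^{-1}\cM_1\big[s_\lambda'(x)\big]\big);
\]
substituting \eqref{M1g1coef} into the first group and \eqref{M1g2coef} (which rests on \eqref{Mcoef2}) into the second, the two $a\kappa^{-1}\psi_2'(1)$ contributions cancel and what remains is $1/\psi_2'(0)$. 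Therefore the left-hand side of \eqref{eq:PropResultDisp2} equals $\dfrac{\psi_2'(1)}{\psi_2'(0)} - \dfrac{\lambda m a^{-1}}{3\psi_2'(0)}$, i.e. \eqref{eq:PropResultDisp2} is just \eqref{eq:PropResultDisp} rewritten in monodromy-matrix form, and the asserted equivalence follows from Proposition \ref{prop1}.

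There is no genuine analytic obstacle here; the content is purely algebraic bookkeeping with the entries of $\mathbb{M}$. The only points requiring care are: confirming that the one-sided limits at the endpoints $0$ and $1$ used to define $\mathbb{M}$ in \eqref{GMatrix} are the very ones entering \eqref{Mcoef1} and \eqref{Mcoef2}, so that the expressions of $c_\lambda,s_\lambda$ on $[0,1]$ through $\psi_1,\psi_2$ are consistent across the vertex; and recalling that $\psi_2'(0)\neq 0$ on the complement of $\Sigma_0$, so that all ratios above are well defined.
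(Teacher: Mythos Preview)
Your proposal is correct and follows essentially the same route as the paper: expand the two matrix--vector brackets in terms of the entries of $\mathbb{M}$, invoke \eqref{M1g1coef} and \eqref{M1g2coef} to identify them with $\psi_2'(1)/\psi_2'(0)$ and $1/\psi_2'(0)$ respectively, and then read off the equivalence from Proposition~\ref{prop1}. Your added remarks about $\psi_2'(0)\neq 0$ off $\Sigma_0$ and the one-sided limits at the endpoints are apt but do not alter the argument.
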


        \begin{proof}[\normalfont \textbf{Proof of Proposition~\ref{prop2}}]
            From equations \eqref{M1g1coef} and \eqref{M1g2coef}, the left-hand side of \ref{eq:PropResultDisp2} becomes
            \begin{align}
                &\Big[(1~a\kappa^{-1}) \mathbb{M}\vv{e}_1\Big] - \frac{\lambda m}{3a} \Big[(1~a\kappa^{-1}) \mathbb{M}\begin{pmatrix}
		    a\kappa^{-1}\\
            1
		\end{pmatrix}\Big]\\ &= 
                \cM_{1}\big[c_{\lambda}(x)\big]+a\kappa^{-1} \cM_{1}\big[c_{\lambda}'(x)\big]\\ 
                & -\frac{\lambda m}{3a} \Big( a\kappa^{-1}\big(\cM_{1}\big[c_{\lambda}(x)\big]+a\kappa^{-1} \cM_{1}\big[c_{\lambda}'(x)\big]\big) + \cM_{1}\big[s_{\lambda}(x)\big]+a\kappa^{-1} \cM_{1}\big[s_{\lambda}'(x)\big] \Big)\\
                &= \frac{\psi_2'(1)}{\psi_2'(0)} -  \frac{\lambda m}{3a} \frac{1}{\psi_2'(0)} \label{prooffinalline}.
            \end{align}
            However, \eqref{prooffinalline} is nothing more than the left-hand side of \eqref{eq:PropResultDisp}. Hence, we get the result we look for.
        \end{proof}
 
	\begin{remark}
		For the case $m \rightarrow 0$ (no concentrated mass at the vertices) and $\kappa^{-1} \rightarrow 0$ (rigid vertices), the result of Proposition \ref{prop2} reduces to the one reported in \cite{KP07}. In fact in this case  
		\begin{equation}
		\mathbb{M} := 
		\begin{pmatrix}
		c_{\lambda}(1) & s_{\lambda}(1) \\
		c_{\lambda}'(1) & s_{\lambda}'(1)
		\end{pmatrix}
		\end{equation}
		and applying the fact that $c_{\lambda}(1) = s_{\lambda}'(1) = \psi_2'(1)/\psi_2'(0)$, relation \eqref{eq:PropResultDisp2} can be replaced by 
		\begin{equation}
		\label{eq:PropResultDisp3}
		\bigg(\frac{1}{2}\tr(\mathbb{M}) - \frac{|S(\theta_1,\theta_2)|}{3}\bigg)
		\bigg(\frac{1}{2}\tr(\mathbb{M}) + \frac{|S(\theta_1,\theta_2)|}{3}\bigg)
		= 0.
		\end{equation}
	\end{remark}
	Next, referring to \eqref{eq:PropResultDisp}, we will fully characterize the spectrum of the Hamiltonian $\cA$ \eqref{eq:eigenProblemEta} with general vertex models \eqref{eq:FBVertexCondEta1}, \eqref{eq:FBVertexCondEta2}.
	
	\subsection{Spectral properties of $\cA$}
    
	Notice that the solution to \eqref{eq:PropResultDisp} implicitly depends on both $\lambda$ and $\kappa$ since $\psi_1(x)$ and $\psi_2(x)$ do. Proposition \ref{prop1}, in particular, says that in order to find the spectrum of $\cA$, we need to calculate the range of $S(\theta_1,\theta_2)$ on $[-\pi,\pi]^2$. 

    The function $S(\theta_1,\theta_2)$ is the same function that appeared in the study of the graphene Hamiltonian \cite{KP07} and other quantum graphs on hexagonal lattices \cite{EH22}. The range of $|S(\theta_1,\theta_2)|$ is $[0,3]$, it has minima at $\pm (2\pi/3,-2\pi/3)$ and maximum at $(0,0)$. These properties can be observed through the identity
	\begin{equation}
		|S(\theta_1,\theta_2)|^2 = |1+e^{i \theta_1} + e^{i\theta_2}|^2 = 1 + 8 \cos\Big(\frac{\theta_1-\theta_2}{2}\Big) \cos\Big(\frac{\theta_1}{2}\Big) \cos\Big(\frac{\theta_2}{2}\Big).
	\end{equation}
	See Figure \ref{fig:SFunc} for a plot and the level curves of this function.
        
        Note that we obtained the dispersion relation in Proposition \ref{prop2} for $\lambda \notin \Sigma_0$, so we study the eigenvalues $\lambda \in \Sigma_0$ separately in Lemma \ref{sigmaDLem}. Before starting our discussion on $\Sigma_0$, we introduce some notation:
        \begin{align}\label{D012Definition}
            \cD_1(\lambda) &:= (1~a\kappa^{-1}) \mathbb{M}\vv{e}_1 = \cM_{1}\big[c_{\lambda}(x)\big]+a\kappa^{-1} \cM_{1}\big[c_{\lambda}'(x)\big],\\
            \cD_2(\lambda) &:= (1~a\kappa^{-1}) \mathbb{M}\vv{e}_2 = \cM_{1}\big[s_{\lambda}(x)\big]+a\kappa^{-1} \cM_{1}\big[s_{\lambda}'(x)\big],\\
            \cD_0(\lambda) &:= (1~a\kappa^{-1}) \mathbb{M}\begin{pmatrix}
		    a\kappa^{-1}\\
            1
		\end{pmatrix} = \cD_2(\lambda) + a\kappa^{-1}\cD_1(\lambda),
        \end{align}
        where $c_{\lambda}$ and $s_{\lambda}$ are the fundamental solutions we introduced in the previous section.
        
        Recall that $\Sigma_0$ is the spectrum of $\cA$ on the interval $(0,1)$ with the boundary conditions \eqref{eq:boundCondD0} and \eqref{eq:boundCondD1}. Assuming $u$ to be such a solution and representing it in terms of the fundamental solutions as
        \begin{equation}
            u(x) = \alpha c_{\lambda}(x) + \beta s_{\lambda}(x),
        \end{equation}
        condition \eqref{eq:boundCondD0} implies that
        \begin{equation}
            \alpha \Big(\cM_{0}\big[c_{\lambda}(x)\big] - a\kappa^{-1} \cM_{0}\big[c_{\lambda}'(x)\big]\Big) + \beta \Big(\cM_{0}\big[s_{\lambda}(x)\big] - a\kappa^{-1} \cM_{0}\big[s_{\lambda}'(x)\big]\Big) = 0,
        \end{equation}
        which is
        \begin{equation}
            \alpha = \beta a\kappa^{-1}.
        \end{equation}
        Therefore, \eqref{eq:boundCondD1} that has the form
        \begin{equation}
            \alpha \cD_1(\lambda) + \beta \cD_2(\lambda) = 0,
        \end{equation}
        becomes 
        \begin{equation}
            \beta [a\kappa^{-1}\cD_1(\lambda) + \cD_2(\lambda)] = 0.
        \end{equation}
        Therefore, we get a representation of $\Sigma_0$ in terms of the fundamental solutions as
        \begin{equation}\label{Sigma0Definition}
            \Sigma_0 = \{ \lambda \in \mathbb{R} ~|~ \cD_0(\lambda) = 0 \}.
        \end{equation}
        Next, let us show that $\Sigma_0$ is a subset of the pure-point spectrum of $\cA$.
        \begin{lem}
		\label{sigmaDLem}
		Each point $\lambda \in \Sigma_0$ is an eigenvalue of infinite multiplicity of the Hamiltonian $\cA$. The corresponding eigenspace is generated by simple loop states, i.e. by eigenfunctions which are supported on a single hexagon and vanish at the vertices.
	\end{lem}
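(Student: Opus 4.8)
The plan is to describe the eigenfunctions explicitly and then count them. By \eqref{Sigma0Definition}, the condition $\lambda\in\Sigma_0$ means exactly that the self-adjoint Sturm--Liouville problem $-a\phi''+q_0\phi=\lambda\phi$ on $(0,1)$ with the decoupled conditions \eqref{eq:boundCondD0}--\eqref{eq:boundCondD1} has a nontrivial solution $\phi$, necessarily unique up to a scalar. I would record two properties of $\phi$ to be used repeatedly. First, $\phi'(0)\neq 0\neq\phi'(1)$: if $\phi'(0)=0$ then \eqref{eq:boundCondD0} gives $\phi(0)=0$ and uniqueness for the edge equation forces $\phi\equiv 0$, and likewise at the other endpoint. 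Second, the symmetry of the potential (cf.\ \eqref{symemtryconditions}) makes the reflection $x\mapsto 1-x$ interchange \eqref{eq:boundCondD0} and \eqref{eq:boundCondD1}, so $\phi$ may be chosen even or odd about $x=1/2$, whence $\phi'(1)=\pm\phi'(0)$. Since every edge of the hexagonal lattice carries (up to orientation) the same potential, $\phi$ transplants to a solution $\phi_e$ of the edge equation on each edge $e$, and along $e$ the derivative $\phi_e'$ changes by a fixed sign $r\in\{\pm1\}$, the same for all $e$.

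\textbf{Construction of loop states and infinite multiplicity.} Fix a hexagonal face $H$ with vertices $\vt_1,\dots,\vt_6$ and edges $f_1,\dots,f_6$ listed cyclically, put $u\equiv 0$ off $H$ and $u|_{f_k}=c_k\,\phi_{f_k}$ on $H$. The external edge at each $\vt_k$ contributes zero function value and zero derivative, so the semi-rigidity condition \eqref{eq:FBVertexCondEta1} at $\vt_k$ forces the common vertex value $u_{\vt_k}^{\hspace{0.5mm}\circ}$ to be $0$, and is then satisfied automatically by the two hexagon edges because $\phi_{f_k}$ obeys \eqref{eq:boundCondD0}--\eqref{eq:boundCondD1}; the right-hand side of \eqref{eq:FBVertexCondEta2} equals $\lambda m\,u_{\vt_k}^{\hspace{0.5mm}\circ}=0$, so \eqref{eq:FBVertexCondEta2} at $\vt_k$ is a nonsingular two-term relation between $c_{k-1}$ and $c_k$, nonsingular because $\phi_e'$ does not vanish at a vertex. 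Composing the six relations around $H$ returns the identity: the sign factors multiply to $(-1)^6$ together with $s_e^{\vt}s_e^{\vt'}=-1$ over the two ends of each edge, and the derivative factors multiply to $r^{\pm6}=1$. Hence there is a nonzero coefficient vector, unique up to a scalar, and the resulting $u$ is a compactly supported element of the operator domain with $\cA u=\lambda u$, supported on $H$ and vanishing at every vertex in the sense $u_\vt^{\hspace{0.5mm}\circ}=0$ --- a simple loop state. As one may clearly choose infinitely many hexagonal faces no two of which share an edge, the associated loop states have essentially disjoint supports, hence are linearly independent, so $\dim\ker(\cA-\lambda)=\infty$ and $\lambda$ is an eigenvalue of infinite multiplicity; in particular $\Sigma_0$ lies in the pure point spectrum of $\cA$.

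\textbf{Completeness.} It remains to show that every $L^2$ eigenfunction is a superposition of loop states. Let $u\in L^2(G)$ satisfy $\cA u=\lambda u$ and take an edge $e$ with endpoints $\vt,\vt'$. On the two-dimensional solution space of the edge equation the linear map $v\mapsto\bigl(v(\vt)+s_e^{\vt}a\kappa^{-1}v'(\vt),\;v(\vt')+s_e^{\vt'}a\kappa^{-1}v'(\vt')\bigr)$ has kernel $\bC\phi_e$ --- one-dimensional precisely because $\lambda\in\Sigma_0$ --- hence rank one, and by the reflection symmetry its range is the diagonal or the anti-diagonal of $\bC^2$. With \eqref{eq:FBVertexCondEta1} this gives $u_{\vt'}^{\hspace{0.5mm}\circ}=\pm u_\vt^{\hspace{0.5mm}\circ}$ across every edge, so $|u_\vt^{\hspace{0.5mm}\circ}|$ takes one value $\delta$ at all vertices. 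If $\delta\neq 0$, then on each edge $u|_e$ is a solution of the edge equation with a prescribed boundary value of modulus $\delta$; these form an affine line missing the origin, whence $\|u|_e\|_{L^2(e)}\geq c\,\delta$ with $c>0$ the same for all $e$, and summing over the infinitely many edges contradicts $u\in L^2(G)$. Therefore $u_\vt^{\hspace{0.5mm}\circ}=0$ at every vertex, so $u|_e=c_e\phi_e$ on each edge, and \eqref{eq:FBVertexCondEta2} becomes $\sum_{e\sim\vt}s_e^{\vt}c_e\phi_e'(\vt)=0$ at each $\vt$, which --- using $\phi_e'\neq0$ at vertices, $r\in\{\pm1\}$, and that the honeycomb is bipartite --- says precisely that $(c_e)_e$ is a $1$-cycle of the lattice. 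Since the honeycomb is planar, its cycle space is generated by the boundaries of its bounded faces, i.e.\ by the hexagons, so $u$ lies in the closed span of the loop states constructed above.

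\textbf{Main difficulty.} The substantive part is the completeness step, and within it the exclusion of a nonzero vertex value $u_\vt^{\hspace{0.5mm}\circ}$ for an $L^2$ eigenfunction; the construction of the loop states and the counting are bookkeeping with the vertex conditions and with the combinatorics of the hexagonal lattice. I expect one further point to deserve a remark: the completeness argument delivers each eigenfunction as an $\ell^2$-superposition of loop states, and identifying this with the closed span of the finitely supported hexagon loop states uses density of finitely supported $1$-cycles in the space of $\ell^2$ $1$-cycles of the honeycomb.
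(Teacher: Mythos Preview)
Your proof is correct and follows essentially the same strategy as the paper: construct compactly supported loop eigenstates on single hexagons to get infinite multiplicity, then show that every $L^2$ eigenfunction has vanishing vertex values $u_\vt^{\hspace{0.5mm}\circ}$ and is therefore an $\ell^2$ $1$-cycle, hence in the span of the hexagon boundaries.

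The differences are presentational. For the construction, the paper splits on the parity of the interval eigenfunction $\phi$ about $x=1/2$ and writes down the coefficients directly (repeat $\phi$ around the hexagon if odd, alternate signs if even); you instead set up coefficients $c_1,\dots,c_6$ and verify that the six net-force relations compose to the identity around the loop, which derives the same answer without the case split. For completeness, the paper simply cites Lemma~3.5 of \cite{KP07}, whereas you reproduce that argument in full: the rank-one range of the boundary map is the (anti-)diagonal by reflection symmetry, so $|u_\vt^{\hspace{0.5mm}\circ}|$ is constant, and square-summability forces it to be zero; then $u|_e\in\bC\phi_e$ and \eqref{eq:FBVertexCondEta2} becomes the cycle condition. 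Your closing remark on the density of finitely supported cycles in the $\ell^2$ cycle space of the honeycomb is exactly the point that \cite{KP07} handles, so you have correctly identified where the remaining work lies.
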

	\begin{proof}[\normalfont \textbf{Proof of Lemma~\ref{sigmaDLem}}] 
		Let us first show that each $\lambda \in \Sigma_0$ is an eigenvalue. Let $u(x)$ be an eigenfunction of the operator $\cA$ with the eigenvalue $\lambda \in \Sigma_0$ satisfying the boundary conditions \eqref{eq:boundCondD0} and \eqref{eq:boundCondD1} on $[0,1]$. Note that $u(1-x)$ is also an eigenfunction with the same eigenvalue. This is obvious for the operator since $u''(1-x) = u''(x)$ and moreover, the potential is symmetric. Moreover, for the boundary conditions, this becomes clear if one changes the direction of edge on $[0,1]$ domain. So, we can assume that $u(x)$ is odd (or even). If $u(x)$ is neither even nor odd, then $u(x) - u(1-x)$ is an odd eigenfunction. For an odd eigenfunction, repeating it on each of the six edges of a hexagon and letting the eigenfunction be zero on any other hexagon, we get an eigenfunction of the operator $\cA$.  Observe that by this construction the first condition in \eqref{eq:FBVertexCondEta1} is met. Regarding the second term in \eqref{eq:FBVertexCondEta2}, we stress that its right-hand side is zero due to the fact that $\lambda \in \Sigma_0$, which will be compatible with its left-hand side sum due to the construction continuity of derivatives stem from odd property of eigenfunction on $[0,1]$. If $u$ is an even eigenfunction, then repeating it around the hexagon with an alternating sign and letting the eigenfunction to be zero on any other hexagon, we get an eigenfunction of the operator $\cA$. Therefore, $\lambda \in \Sigma_0$. We get the rest of the proof following the arguments of Lemma 3.5 in \cite{KP07}.
	\end{proof}

        Observe that for $\lambda \not \in \Sigma_0$, the solution of the eigenvalue problem on $(0,1)$ satisfying boundary conditions \eqref{eq:etaIndSol} can be explicitly determined. In fact, for $k = 1,2$ the solutions are represented by $$\psi_k(x) = A_k s_{\lambda}(x) + B_k c_{\lambda}(x),$$ in which coefficients $A_1$ and $A_2$ satisfy 
	\begin{align}
	A_1 &= -\frac{\cD_1(\lambda)}{\cD_0(\lambda)}, \label{eq:A1} \\
        A_2 &= \frac{1}{\cD_0(\lambda)}, \label{eq:A2}
	\end{align}
        while 
        \begin{align}
	B_1 &= 1+a\kappa^{-1} A_1, \label{eq:B1} \\
        B_2 &= a\kappa^{-1} A_2. \label{eq:B2}
	\end{align}
        
        We remark that due to restriction $\lambda \not \in \Sigma_0$, the denominators in the coefficients are never zero, including $\lambda = 0$. 
	Now observe that solution for $\psi_2(x)$ with \eqref{eq:A2} and \eqref{eq:B2} implies
	\begin{equation}
	\begin{split}
	\psi_2'(0) &=  \frac{1}{\cD_0(\lambda)},\\
	\psi_2'(1) &= \frac{
	\cD_1(\lambda)}{\cD_0(\lambda)}.
	\end{split}
	\end{equation}
	Putting these characteristics in conjuction with multiple simplifications, the relation \eqref{eq:PropResultDisp} reduces to 
	\begin{equation}
	\label{eq:DR}
	\Delta^2(\lambda;\kappa,m,a) = |S(\theta_1,\theta_2)|^2/9
	\end{equation}
	with a (discriminant) type function of the form 
	\begin{equation}\label{eq:DeltaFunctionT1T2}
	   \Delta(\lambda;\kappa,m,a) := T_1(\lambda;\kappa,a) - \frac{m}{3} T_2(\lambda;\kappa,a),  
	\end{equation}
        where $T_1$ and $T_2$ are defined as 
	\begin{equation}
	\label{eq:T1T2}
	\begin{split}
	T_1(\lambda;\kappa,a) &:= \cD_1(\lambda), \\
	T_2(\lambda;\kappa,a) &:= \frac{\lambda}{a} \cD_0(\lambda).
	\end{split}
	\end{equation}

        Next, we describe the pure point, absolutely continuous and singular continuous spectra, and the dispersion relation of $\cA$. 
	\begin{thm}
		\label{thm:DispThm}
		Let  $\cA$ be the Hamiltonian defined in \eqref{eq:eigenProblemEta} satisfying the vertex conditions \eqref{eq:FBVertexCondEta1} and \eqref{eq:FBVertexCondEta2}. Then we have the following porperties of $\cA$.
		\begin{itemize}
			\item[(i)] The singular continuous spectrum $\sigma_{sc}(\cA)$ is empty. 
			\item[(ii)] The absolutely continuous spectrum $\sigma_{ac}(\cA)$ has band-gap structure and as a set is represented by
			\begin{equation}
			\sigma_{ac}(\cA) = \big\{\lambda \in \bR ~:~ \Delta(\lambda;\kappa,m,a) \in [-1,1]
			\big\}
			\end{equation}
			where $\Delta(\lambda;\kappa,m,a) = T_1(\lambda;\kappa,a) -\frac{m}{3} T_2(\lambda;\kappa,a)$ is defined in \eqref{eq:DeltaFunctionT1T2} and \eqref{eq:T1T2}.
			\item[(iii)] The pure point spectrum $\sigma_{pp}(\cA)$ coincides with the set $\Sigma_0$ that is defined in \eqref{Sigma0Definition}, and the eigenvalues are of infinite multiplicity with the corresponding eigenspaces generated by simple loop eigenstates. 
			\item[(iv)] The dispersion relation of $\cA$ consists of the variety 
			\begin{equation}
			\label{eq:PropResult}
			\Delta^2(\lambda;\kappa,m,a) = |S(\theta_1,\theta_2)|^2/9
			\end{equation}
			and the collection of flat branches for $\lambda \in \Sigma_0$.
		\end{itemize}
	\end{thm}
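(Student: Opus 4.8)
The plan is to assemble the four assertions from pieces that are, by this point, almost all in place. The backbone of the argument is the standard Floquet--Bloch machinery for periodic operators on metric graphs, combined with the reduction of the dispersion relation to the scalar equation $\Delta^2(\lambda;\kappa,m,a) = |S(\theta_1,\theta_2)|^2/9$ already derived in \eqref{eq:DR}.

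First I would dispose of part (iii) and the flat-branch half of (iv): this is precisely the content of Lemma \ref{sigmaDLem} together with the representation \eqref{Sigma0Definition} of $\Sigma_0$. The only thing to add is the converse inclusion $\sigma_{pp}(\cA) \subseteq \Sigma_0$, i.e.\ that no eigenvalue can come from $\lambda \notin \Sigma_0$; this follows because for $\lambda \notin \Sigma_0$ the reduced equation \eqref{eq:PropResultDisp2} is a genuine dispersion relation whose solution set, via \eqref{eq:DR}, is parametrized by the continuous quasimomentum $\Theta$, so each such $\lambda$ sits inside a band and the corresponding generalized eigenfunctions are Bloch waves, not $L^2$. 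Formally one invokes the direct-integral decomposition $\cA \cong \int_{\bT^2}^{\oplus} \cA^{\Theta}\, d\Theta$ and notes that a fibered operator has eigenvalues only where a band $\lambda_j(\cdot)$ is constant on a set of positive measure; since away from $\Sigma_0$ the bands are real-analytic and non-constant (they solve $\Delta^2(\lambda) = |S(\Theta)|^2/9$ with $\Delta$ real-analytic and $|S|$ taking a full interval of values), no flat bands occur there. This same decomposition gives (i): each fiber $\cA^{\Theta}$ has compact resolvent (Theorem \ref{MainThm} gives this for $\cA$ on the fundamental domain; the Floquet conditions \eqref{eq:FBThm} are a bounded perturbation of the vertex conditions and preserve compactness of the resolvent), hence discrete spectrum, and a direct integral of operators with discrete fibered spectra and real-analytic band functions has purely absolutely continuous spectrum off the set of flat bands --- so $\sigma_{sc}(\cA) = \emptyset$.

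For (ii) and the non-flat part of (iv), I would argue as follows. By Proposition \ref{prop2} and the simplification \eqref{eq:DR}, for $\lambda \notin \Sigma_0$ we have $\lambda \in \sigma(\cA)$ iff $\Delta^2(\lambda;\kappa,m,a) = |S(\theta_1,\theta_2)|^2/9$ for some $(\theta_1,\theta_2) \in [-\pi,\pi]^2$. Since the range of $|S(\theta_1,\theta_2)|$ is exactly $[0,3]$ (from the identity $|S|^2 = 1 + 8\cos\frac{\theta_1-\theta_2}{2}\cos\frac{\theta_1}{2}\cos\frac{\theta_2}{2}$ recorded above), the right-hand side ranges over all of $[0,1]$, so the condition becomes simply $\Delta^2(\lambda;\kappa,m,a) \in [0,1]$, equivalently $\Delta(\lambda;\kappa,m,a) \in [-1,1]$. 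This exhibits $\sigma(\cA) \setminus \Sigma_0$ as the preimage under the real-analytic function $\Delta(\cdot)$ of $[-1,1]$, which is a locally finite union of closed intervals --- the band-gap structure. That this part of the spectrum is absolutely continuous is again the direct-integral statement: on $\{\Delta \in (-1,1)\}$ the band functions $\lambda_j(\Theta)$ have nonvanishing gradient (since $\Delta'(\lambda) \neq 0$ generically, or more robustly since $|S|$ has nonvanishing gradient away from its critical points), so each band contributes absolutely continuous spectrum; the finitely many critical energies (band edges, and the energies where $\Delta' = 0$) form a measure-zero set and do not affect the classification. Finally, the dispersion relation of $\cA$ in the sense of Subsection \ref{subsecQuasimomentum} is by definition $\bigcup_{\Theta} \sigma(\cA^{\Theta})$ viewed as a variety in $(\lambda,\Theta)$-space; combining the $\lambda \notin \Sigma_0$ description \eqref{eq:PropResult} with the flat branches at $\lambda \in \Sigma_0$ from Lemma \ref{sigmaDLem} gives (iv).

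The main obstacle is the rigorous justification of the spectral-type statements (i) and (ii) --- that the spectrum off $\Sigma_0$ is purely absolutely continuous with no singular continuous part. The reduction via Proposition \ref{prop2} is purely about \emph{location} of the spectrum; upgrading this to a statement about spectral \emph{type} requires the analytic-Fredholm / direct-integral argument, where one must check that the fibered family $\Theta \mapsto \cA^{\Theta}$ is analytic in a suitable sense and that the band functions are non-constant (which fails exactly on $\Sigma_0$, accounting for the pure point part). The cleanest route is to cite the general theorem for periodic quantum graphs (e.g.\ as in \cite{K16,BK13}) that the fibered decomposition yields $\sigma_{ac}$ on non-flat bands and $\sigma_{pp}$ on flat bands with no $\sigma_{sc}$, and then simply identify, via \eqref{eq:DR} and Lemma \ref{sigmaDLem}, which energies fall into which case. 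I would also remark that the compactness of the resolvent of $\cA^{\Theta}$ (needed so each fiber has discrete spectrum) follows from Theorem \ref{MainThm} together with the observation that imposing \eqref{eq:FBThm} amounts to a finite-rank modification of the vertex coupling on the fundamental domain.
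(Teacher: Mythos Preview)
Your proposal is correct and follows essentially the same route as the paper: assemble (iv) from Proposition~\ref{prop2}/\eqref{eq:DR} plus Lemma~\ref{sigmaDLem}, read off (ii) from the fact that the range of $|S|$ is $[0,3]$, and get (iii) by combining Lemma~\ref{sigmaDLem} with the observation that non-flat bands cannot produce $L^2$ eigenfunctions. The only noticeable difference is in how the spectral-type claims are packaged: for (i) the paper simply cites a general result for periodic self-adjoint elliptic operators (Corollary~6.11 in \cite{K16}) rather than unpacking the direct-integral argument, and for the inclusion $\sigma_{pp}(\cA)\subseteq\Sigma_0$ it invokes Thomas' analytic continuation argument \cite{T73,RS78,KP07} (complexifying the quasimomentum to force any flat band to be globally constant), whereas you argue more directly via real-analyticity and non-constancy of $\lambda\mapsto\Delta(\lambda)$ against $\Theta\mapsto|S(\Theta)|$. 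Both routes are standard and lead to the same conclusion; the paper's is terser by outsourcing to citations, yours makes the mechanism more visible.
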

	\begin{proof}[\normalfont \textbf{Proof of Theorem~\ref{thm:DispThm}}] 
		Proofs of the items above are based on the tools developed in this paper along with already-established results in our references. For item $(i)$, observe that the singular continuous spectrum is empty, since $\cA$ is a self-adjoint elliptic operator (see e.g. Corollary 6.11 in \cite{K16}). The proof of $(ii)$ is based on Proposition \ref{prop1} and results in \eqref{eq:DR} and Lemma \ref{sigmaDLem}. Observe that since the range of $|S(\theta_1,\theta_2)|$ is $[0,3]$, then \eqref{eq:DR} is valid if and only if $\Delta(\lambda;\kappa,m) \in [-1,1]$. According to the Thomas’ analytic continuation argument, the eigenvalues correspond to the constant branches of the dispersion relation \cite{KP07,RS78,T73}. Since constant branches only occur at $\Sigma_0$, we get $\sigma_{\text{pp}}(\cA) \subseteq \Sigma_0$. But Lemma \ref{sigmaDLem} shows that $\Sigma_0 \subseteq \sigma_{\text{pp}}(\cA)$, so item $(iii)$ is the result of Lemma \ref{sigmaDLem}. 
		Finally, item $(iv)$ is the result of Proposition \ref{prop1}, identity \eqref{eq:DR}, and Lemma \ref{sigmaDLem}.
	\end{proof}
	The next proposition proves the existence of Dirac points of $\cA$.
	\begin{prop}
		\label{cor:DiracPoints}
		The set of Dirac points of $\cA$ is given by 
		\begin{equation}
		\big\{(\theta_1,\theta_2,\lambda) \in \bR^3 ~|~ (\theta_1,\theta_2) = \pm (2\pi/3,-2\pi/3) \quad \text{ and }\quad \Delta(\lambda;\kappa,m,a) = 0 \big\}.
		\end{equation}
		In other words, the dispersion surface of $\cA$ has conical singularities at any $\lambda$ satisfying
		\begin{equation}\label{CorDispD1D2}
		\cD_1(\lambda) - \frac{m}{3}\frac{\lambda}{a}\cD_0(\lambda) = 0,
		\end{equation}
        where $\cD_0$ and $\cD_1$ are defined in \eqref{D012Definition}.
	\end{prop}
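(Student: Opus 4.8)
The plan is to locate the Dirac points by identifying where two sheets of the dispersion relation $\Delta^2(\lambda;\kappa,m,a) = |S(\theta_1,\theta_2)|^2/9$ from Theorem~\ref{thm:DispThm}(iv) meet and form a conical singularity. The starting observation is that for fixed $\lambda\notin\Sigma_0$ the two branches are $\Delta = +|S(\theta_1,\theta_2)|/3$ and $\Delta = -|S(\theta_1,\theta_2)|/3$; these can only coincide where $|S(\theta_1,\theta_2)| = 0$, and by the properties of $S$ recorded after Proposition~\ref{prop2} (the identity $|S|^2 = 1 + 8\cos\frac{\theta_1-\theta_2}{2}\cos\frac{\theta_1}{2}\cos\frac{\theta_2}{2}$, with range $[0,3]$ and minimum value $0$ attained exactly at $\pm(2\pi/3,-2\pi/3)$), this forces $(\theta_1,\theta_2) = \pm(2\pi/3,-2\pi/3)$. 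At such a point the corresponding $\lambda$ must satisfy $\Delta(\lambda;\kappa,m,a) = 0$, which by \eqref{eq:DeltaFunctionT1T2}–\eqref{eq:T1T2} is exactly \eqref{CorDispD1D2}. This pins down the candidate set; the remaining work is to verify that the singularity is genuinely conical rather than, say, a smooth touching or a higher-order tangency.

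The key steps, in order, are as follows. First I would fix $\lambda_0\notin\Sigma_0$ with $\Delta(\lambda_0;\kappa,m,a)=0$ and expand near the vertex $(\theta_1^0,\theta_2^0) = (2\pi/3,-2\pi/3)$ (the other vertex is handled identically). Near this point $|S(\theta_1,\theta_2)|^2$ vanishes to second order; a Taylor expansion gives $|S(\theta_1,\theta_2)|^2 = Q(\eta_1,\eta_2) + O(|\eta|^3)$ where $\eta = (\theta_1-\theta_1^0,\theta_2-\theta_2^0)$ and $Q$ is a positive-definite quadratic form — this is the standard computation behind the graphene Dirac cones and can be quoted from \cite{KP07,EH22}. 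Second, I would expand $\Delta(\lambda;\kappa,m,a)$ to first order in $\lambda$ about $\lambda_0$: writing $\Delta(\lambda) = \Delta'(\lambda_0)(\lambda-\lambda_0) + O((\lambda-\lambda_0)^2)$, the defining equation $\Delta^2(\lambda) = |S|^2/9$ becomes $\big(\Delta'(\lambda_0)\big)^2(\lambda-\lambda_0)^2 \approx Q(\eta)/9$, i.e. $\lambda - \lambda_0 \approx \pm\frac{1}{3|\Delta'(\lambda_0)|}\sqrt{Q(\eta)}$, which is precisely the equation of a (possibly skew) cone in $(\eta_1,\eta_2,\lambda)$-space, provided $\Delta'(\lambda_0)\neq 0$. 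Third, I would record the non-degeneracy condition $\Delta'(\lambda_0)\neq 0$: since $\cD_0$, $\cD_1$ are entire in $\lambda$ and $\Delta$ is not identically zero, its zeros are isolated, and at a simple zero $\Delta'(\lambda_0)\neq 0$ automatically; the case of a multiple zero would merely change the opening profile of the cone, so one can either note it is generic or restrict to simple zeros as in \cite{KP07}.

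The main obstacle I expect is the non-degeneracy bookkeeping at the conical point — specifically, confirming that $\Delta'(\lambda_0)\neq 0$ (equivalently, that the relevant zero of \eqref{CorDispD1D2} is simple) and that the quadratic form $Q$ coming from $|S|^2$ is indeed positive-definite rather than degenerate, so that the level set is a bona fide cone and not a cylinder or a pair of planes. The positive-definiteness of $Q$ is a finite computation with the Hessian of $1 + 8\cos\frac{\theta_1-\theta_2}{2}\cos\frac{\theta_1}{2}\cos\frac{\theta_2}{2}$ at $(2\pi/3,-2\pi/3)$ and matches the known graphene case; the simplicity of the $\lambda$-zero can be argued from analyticity plus the fact that $\Delta$ is nonconstant (since $T_1,T_2$ are nonconstant entire functions of $\lambda$). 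Once these two points are in hand, matching the expansion to the normal form of a conical singularity completes the proof, and the exceptional points where $\lambda_0\in\Sigma_0$ are excluded since there the flat branches, not the variety \eqref{eq:PropResult}, govern the local picture.
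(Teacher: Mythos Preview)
Your proposal is correct and follows essentially the same route as the paper: both identify the candidate quasimomenta as the zeros of $|S(\theta_1,\theta_2)|$, namely $\pm(2\pi/3,-2\pi/3)$, and then read off the condition $\Delta(\lambda;\kappa,m,a)=0$, which is \eqref{CorDispD1D2}. The paper's own argument is considerably terser than yours --- it simply notes that $\pm|S|$ already has conical singularities at these points (referring to Figure~\ref{fig:SFunc}) and that $\Delta$ is an entire function of $\lambda$ via the monodromy matrix, and declares the result; it does not carry out the Taylor expansion of $|S|^2$, does not isolate the quadratic form $Q$, and does not discuss the non-degeneracy condition $\Delta'(\lambda_0)\neq 0$ that you flag. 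In that sense your outline is more careful than what the paper actually records, but the underlying idea is the same.
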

	\begin{proof}[\normalfont \textbf{Proof of Proposition~\ref{cor:DiracPoints}}] 
        Recall that the dispersion relation is
        \begin{equation}\label{CorproofDisp}
		\Delta(\lambda;\kappa,m,a) = \pm |S(\theta_1,\theta_2)|/3.
	\end{equation}
        The right-hand side function $|S(\theta_1,\theta_2)|$ has non-degenerate minima $S(\theta_1,\theta_2) = 0$ (only) at the points $\pm(2\pi/3,-2\pi/3)$. Thus, the function $\pm |S(\theta_1,\theta_2)|$ has conical singularities at these points; see Figure \ref{fig:SFunc}. Since the left-hand side function $\Delta(\lambda;\kappa,m,a)$ depends on the entries of the monodromy matrix $\mathbb{M}$, which is an entire function of $\lambda$, we get the result we look for. Note that \eqref{CorDispD1D2} is nothing more than $$\Delta(\lambda;\kappa,m,a) = 0.$$
	\end{proof}

    \begin{remark}
        The identity \eqref{CorDispD1D2} takes the following forms in the corresponding special cases:
            \begin{enumerate}
                \item \textit{Rigid vertices with no concentrated mass ($\kappa^{-1} = 0$, $m = 0$):}
                \begin{equation}\label{GrapheneDirac}
                    c_{\lambda}(1) = 0
                \end{equation}
                \item \textit{Rigid vertices with concentrated mass ($\kappa^{-1} = 0$, $m > 0$):}
                \begin{equation}
                    c_{\lambda}(1) - \frac{m}{3}\frac{\lambda}{a}s_{\lambda}(1) = 0
                \end{equation}
                \item \textit{Semi-rigid vertices with no concentrated mass ($\kappa^{-1} > 0$, $m = 0$):}
                \begin{equation}
                    \cM_{1}\big[c_{\lambda}(x)\big] + a\kappa^{-1}\cM_{1}\big[c_{\lambda}'(x)\big] = 0
                \end{equation}
            \end{enumerate}
        Note that \eqref{GrapheneDirac} agrees with the Dirac points of graphene Hamiltonian shown in Corollary 3.7 of \cite{KP07}.
    \end{remark}

    \section{Spectral analysis of the free operator $\cA_0$}
	\label{Section4}

In this section, we consider the spectral properties we discussed in the previous sections for the free operator $\cA_0$. The presentation of the spectral analysis for this special case is in favor of readers with more applied background. Moreover, it will allow us to highlight the following observations:
\begin{enumerate}
    \item Classical Borg's theorem says that the one dimensional periodic Schr\"{o}dinger operator has no non-degenerate spectral gaps if and only if the potential function $q$ is constant \cite{B46}. The same result is valid for the graphene Hamiltonian \cite{KP07}. The study of Borg-type theorems in various continuous and discrete settings is an active research topic, see \cite{L23} and references therein. 
    
    In this section, we will observe that $\cA_0$ has non-degenerate spectral gaps, which means that the existence of semi-rigidity ($\kappa^{-1} > 0$) or concentrated mass ($m > 0$) creates spectral non-degenerate gaps even for the zero (and hence constant) potential.
    \item Recall from the previous section that $\Sigma_0$ characterizes the eigenvalues $\cA$. In this section, we will see that $\Sigma_0$ for the zero potential belongs to the set of end points of the spectral bands, that is 
    \begin{equation}
        \Sigma_0 \subseteq \{ \lambda \in \mathbb{R} ~|~ \Delta(\lambda;\kappa,m,a) = -1 \quad \text{or} \quad \Delta(\lambda;\kappa,m,a) = 1\}.
    \end{equation}
    \item We will also describe the dispersion relation, the spectrum, eigenvalues and the Dirac points in terms of $\cos(\sqrt{\lambda/a})$ and $\sin(\sqrt{\lambda/a})/\sqrt{\lambda/a}$.
\end{enumerate}

Let us start by providing the functions and notation that appeared in our result in previous sections for $\cA_0$. First we introduce $\mu := \sqrt{\lambda/a}$. Then, since the fundamental solutions for the zero potential are $c_{\lambda}(x) = \cos(\mu x)$ and $s_{\lambda}(x) = \sin(\mu x)/\mu$, the monodromy matrix becomes
\begin{equation}
    \mathbb{M} := \begin{pmatrix}
        \cos(\mu) & (1/\mu)\sin(\mu)\\
        -\mu\sin(\mu) & \cos(\mu)
    \end{pmatrix}
\end{equation}
and hence $\det(\mathbb{M}) = 1$ and $\tr(\mathbb{M}) = 2\cos(\mu)$. Therefore, the functions $\cD_k$ we introduced in \eqref{D012Definition} become
\begin{align}
    \cD_1(\lambda) &= \cos(\mu) - a\kappa^{-1}\mu^2\frac{\sin(\mu)}{\mu},\label{D1free}  \\
    \cD_2(\lambda) &= a\kappa^{-1}\cos(\mu) + \frac{\sin(\mu)}{\mu}, \label{D2free} \\
    \cD_0(\lambda) &= 2a\kappa^{-1}\cos(\mu) + \big[1 - (a\kappa^{-1})^2\mu^2\big]\frac{\sin(\mu)}{\mu}. \label{D0free}
\end{align}
Let us also recall that 
\begin{equation}
    \Sigma_0 = \{\lambda \in \mathbb{R} ~|~ \cD_0(\lambda) = 0\}
\end{equation}
and 
\begin{equation}\label{Deltafree}
    \Delta(\lambda;\kappa,m,a) = \cD_1(\lambda) - \frac{m}{3}\mu^2\cD_0(\lambda).
\end{equation}

Now, we can state our spectral results from the previous section for $\cA_0$.

\begin{lem}
		\label{sigmaDLemfree}
		Each point $\lambda \in \Sigma_0$, that is satisfying
        \begin{equation}
            \big[1 - (a\kappa^{-1})^2(\lambda/a)\big]\sin\big(\sqrt{\lambda/a}\big) + 2a\kappa^{-1}\cos\big(\sqrt{\lambda/a}\big) = 0
        \end{equation}
        is an eigenvalue of infinite multiplicity of the Hamiltonian $\cA_0$. The corresponding eigenspace is generated by simple loop states, i.e. by eigenfunctions which are supported on a single hexagon and vanish at the vertices.
	\end{lem}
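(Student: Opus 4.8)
The plan is to show that Lemma~\ref{sigmaDLemfree} is simply the specialization of Lemma~\ref{sigmaDLem} to the zero potential, so the proof reduces to two tasks: (a) verifying that the abstract condition $\cD_0(\lambda)=0$ defining $\Sigma_0$ in \eqref{Sigma0Definition} takes the explicit trigonometric form stated in the lemma, and (b) invoking Lemma~\ref{sigmaDLem} verbatim for the conclusion about infinite multiplicity and simple loop eigenstates. Task (b) is immediate because Lemma~\ref{sigmaDLem} holds for any periodic symmetric potential, in particular $q\equiv 0$, and the symmetry properties \eqref{symemtryconditions} used there hold trivially for the even/odd fundamental solutions $\cos(\mu x)$ and $\sin(\mu x)/\mu$.

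For task (a), I would start from formula \eqref{D0free}, namely
\begin{equation*}
\cD_0(\lambda) = 2a\kappa^{-1}\cos(\mu) + \big[1 - (a\kappa^{-1})^2\mu^2\big]\frac{\sin(\mu)}{\mu},
\end{equation*}
with $\mu = \sqrt{\lambda/a}$. Setting $\cD_0(\lambda)=0$ and multiplying through by $\mu$ (which is harmless as an equivalence, since at $\mu=0$ one checks $\cD_0=2a\kappa^{-1}\neq 0$, so $\lambda=0\notin\Sigma_0$), one obtains
\begin{equation*}
\big[1 - (a\kappa^{-1})^2\mu^2\big]\sin(\mu) + 2a\kappa^{-1}\mu\cos(\mu) = 0.
\end{equation*}
Substituting $\mu^2 = \lambda/a$ and $\mu = \sqrt{\lambda/a}$ then gives exactly the displayed equation in the lemma statement. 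This is a one-line algebraic rearrangement; the only subtlety worth a remark is that one should treat $\mu$ as ranging over $[0,\infty)$ for $\lambda\geq 0$ and note that for $\lambda<0$ the same identity holds with $\mu$ imaginary (equivalently, replace $\sin,\cos$ by $\sinh,\cosh$), but since $\cA_0$ is nonnegative this regime is vacuous and need not be addressed.

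The only genuine (and very mild) obstacle is bookkeeping around $\lambda=0$: one must confirm $0\notin\Sigma_0$ before clearing the $1/\mu$ factor, which is done by direct inspection of \eqref{D0free} as above. Everything else is a transcription of already-proved results, so the proof will be two or three sentences: state that Lemma~\ref{sigmaDLem} applies to $q\equiv 0$, rewrite $\cD_0(\lambda)=0$ using \eqref{D0free} into the claimed trigonometric identity after multiplying by $\mu$, and note that $\lambda=0$ is excluded because $\cD_0(0)=2a\kappa^{-1}\neq 0$.
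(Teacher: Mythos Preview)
Your approach is correct and matches the paper's: Lemma~\ref{sigmaDLemfree} is stated without an independent proof, as the direct specialization of Lemma~\ref{sigmaDLem} to $q\equiv 0$ using the explicit formula \eqref{D0free}, which is exactly what you do.

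Two minor corrections. First, $\cD_0(0)=2a\kappa^{-1}+1$, not $2a\kappa^{-1}$, since $\sin(\mu)/\mu\to 1$ as $\mu\to 0$; your conclusion $0\notin\Sigma_0$ still holds. Second, after you multiply \eqref{D0free} by $\mu$ you correctly obtain $[1-(a\kappa^{-1})^2\mu^2]\sin(\mu)+2a\kappa^{-1}\mu\cos(\mu)=0$, which carries an extra factor $\sqrt{\lambda/a}$ on the cosine term compared to the displayed equation in the lemma; this is a typo in the lemma statement (compare with the correct form in Theorem~\ref{thm:DispThmfree}\,(iii), where the $\sin(\sqrt{\lambda/a})/\sqrt{\lambda/a}$ is retained), not an error in your derivation.
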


\begin{thm}
		\label{thm:DispThmfree}
		Let  $\cA_0$ be the Hamiltonian defined in \eqref{eq:eigenProblemEta} with $q \equiv 0$ satisfying the vertex conditions \eqref{eq:FBVertexCondEta1} and \eqref{eq:FBVertexCondEta2}. Then we have the following properties of $\cA_0$.
		\begin{itemize}
			\item[(i)] The singular continuous spectrum $\sigma_{sc}(\cA_0)$ is empty. 
			\item[(ii)] The absolutely continuous spectrum $\sigma_{ac}(\cA_0)$ has band-gap structure and as a set is represented by $\lambda \in \mathbb{R}$ satisfying
			\begin{equation}
			-1 \leq \bigg[1 + 2(a\kappa^{-1})\frac{m}{3}\frac{\lambda}{a}\bigg]\cos(\sqrt{\lambda/a}) - \bigg[a\kappa^{-1}(\lambda/a) + \big[1 - (a\kappa^{-1})^2(\lambda/a)\big]\frac{m}{3}\frac{\lambda}{a} \bigg]\frac{\sin(\sqrt{\lambda/a})}{\sqrt{\lambda/a}} \leq 1.
			\end{equation}
			\item[(iii)] The pure point spectrum $\sigma_{pp}(\cA_0)$ coincides with the set $$\Sigma_0 = \Big\{ \lambda \in \mathbb{R} ~\Big|~ \big[1 - (a\kappa^{-1})^2(\lambda/a)\big]\frac{\sin\big(\sqrt{\lambda/a}\big)}{\sqrt{\lambda/a}} + 2a\kappa^{-1}\cos\big(\sqrt{\lambda/a}\big) = 0\Big\},$$ 
            and the eigenvalues are of infinite multiplicity with the corresponding eigenspaces generated by simple loop eigenstates. 
			\item[(iv)] The dispersion relation of $\cA_0$ consists of the variety 
			\begin{equation}
			\label{eq:PropResultfree}
			\bigg[1 + 2(a\kappa^{-1})\frac{m}{3}\frac{\lambda}{a}\bigg]\cos(\sqrt{\lambda/a}) - \bigg[a\kappa^{-1}(\lambda/a) + \big[1 - (a\kappa^{-1})^2(\lambda/a)\big]\frac{m}{3}\frac{\lambda}{a} \bigg]\frac{\sin(\sqrt{\lambda/a})}{\sqrt{\lambda/a}} = \pm\frac{|S(\theta_1,\theta_2)|}{3}
			\end{equation}
			and the collection of flat branches for $\lambda \in \mathbb{R}$ satisfying
            \begin{equation}
                \big[1 - (a\kappa^{-1})^2(\lambda/a)\big]\frac{\sin\big(\sqrt{\lambda/a}\big)}{\sqrt{\lambda/a}} + 2a\kappa^{-1}\cos\big(\sqrt{\lambda/a}\big) = 0.
            \end{equation}
		\end{itemize}
	\end{thm}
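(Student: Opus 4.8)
The plan is to specialize the general results of Theorem~\ref{thm:DispThm} and Proposition~\ref{cor:DiracPoints} to the zero potential by substituting the explicit fundamental solutions $c_\lambda(x)=\cos(\mu x)$ and $s_\lambda(x)=\sin(\mu x)/\mu$ with $\mu=\sqrt{\lambda/a}$, so that the proof is essentially a bookkeeping computation with no new analytic input. First I would record that, since the potential is smooth and the vertex conditions are unchanged, item~(i) (emptiness of $\sigma_{sc}$) is immediate from Theorem~\ref{thm:DispThm}(i). Then I would compute the entries of the monodromy matrix $\mathbb{M}$ for $q\equiv 0$, note $\cM_1[c_\lambda]=\cos\mu$, $\cM_1[c_\lambda']=-\mu\sin\mu$, $\cM_1[s_\lambda]=\sin\mu/\mu$, $\cM_1[s_\lambda']=\cos\mu$ (there is no actual jump here, since the zero potential is continuous, so the $\cM_1$ brackets just evaluate the solutions at $1$), and plug these into the definitions \eqref{D012Definition} to obtain the formulas \eqref{D1free}, \eqref{D2free}, \eqref{D0free} for $\cD_1$, $\cD_2$, $\cD_0$.

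Next I would assemble $\Delta(\lambda;\kappa,m,a)=\cD_1(\lambda)-\tfrac{m}{3}\mu^2\cD_0(\lambda)$ from \eqref{Deltafree} and expand it: grouping the $\cos(\sqrt{\lambda/a})$ terms gives the coefficient $1+2(a\kappa^{-1})\tfrac{m}{3}\tfrac{\lambda}{a}$, and grouping the $\sin(\sqrt{\lambda/a})/\sqrt{\lambda/a}$ terms gives the coefficient $-\bigl[a\kappa^{-1}(\lambda/a)+[1-(a\kappa^{-1})^2(\lambda/a)]\tfrac{m}{3}\tfrac{\lambda}{a}\bigr]$ (using $\mu^2=\lambda/a$ and $\mu^2\cdot\sin\mu/\mu=\mu\sin\mu=(\lambda/a)\cdot\sin(\sqrt{\lambda/a})/\sqrt{\lambda/a}$, and similarly $\mu^2\cos\mu$ absorbs into the stated form after a line of algebra). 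Items~(ii) and~(iv) are then just Theorem~\ref{thm:DispThm}(ii),(iv) with this explicit $\Delta$ inserted, using that $|S|$ ranges over $[0,3]$ so $\Delta^2=|S|^2/9$ holds for some $(\theta_1,\theta_2)$ exactly when $\Delta\in[-1,1]$. Item~(iii) is Theorem~\ref{thm:DispThm}(iii) together with the explicit equation $\cD_0(\lambda)=0$, i.e. $[1-(a\kappa^{-1})^2(\lambda/a)]\sin(\sqrt{\lambda/a})/\sqrt{\lambda/a}+2a\kappa^{-1}\cos(\sqrt{\lambda/a})=0$, which is exactly the condition in Lemma~\ref{sigmaDLemfree}; so the flat branches in~(iv) occur precisely on this set as well.

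One technical point worth handling carefully is the value $\lambda=0$ (equivalently $\mu=0$): the functions $\cos(\sqrt{\lambda/a})$ and $\sin(\sqrt{\lambda/a})/\sqrt{\lambda/a}$ extend analytically to $\lambda=0$ (the latter with value $1$), and since $0\notin\Sigma_0$ in general (as remarked after \eqref{eq:B2}), the formulas are valid there by continuity; I would add a sentence making this explicit. Likewise, for $\lambda<0$ one interprets $\sqrt{\lambda/a}$ as $i\sqrt{|\lambda|/a}$, turning the trigonometric functions into hyperbolic ones, and the identities still hold since $\mathbb{M}(\lambda)$ is entire in $\lambda$; this is the standard convention and needs only a remark rather than a separate argument.

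The main obstacle, such as it is, is purely organizational: one must be consistent about whether the coefficients are written in terms of $\mu$, $\mu^2$, $\lambda/a$, or $\sqrt{\lambda/a}$, and make the trigonometric regroupings line up exactly with the displayed expressions in the statement. There is no genuine analytic difficulty beyond the regrouping, since every structural fact — emptiness of $\sigma_{sc}$, the band-gap description of $\sigma_{ac}$, the identification $\sigma_{pp}=\Sigma_0$ with infinite-multiplicity loop states, and the form of the Bloch variety — is inherited verbatim from Theorem~\ref{thm:DispThm}.
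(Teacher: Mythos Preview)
Your approach is exactly what the paper does: Theorem~\ref{thm:DispThmfree} is stated without a separate proof, as the paper has already computed the explicit monodromy matrix, $\cD_0$, $\cD_1$, $\cD_2$ and $\Delta$ for $q\equiv 0$ in the paragraphs immediately preceding it, so the result is presented as the direct specialization of Theorem~\ref{thm:DispThm}. Your plan to substitute $c_\lambda(x)=\cos(\mu x)$, $s_\lambda(x)=\sin(\mu x)/\mu$ into \eqref{D012Definition} and \eqref{Deltafree} and then invoke items (i)--(iv) of Theorem~\ref{thm:DispThm} verbatim is precisely this, and your remarks on $\lambda=0$ and $\lambda<0$ are reasonable extra care that the paper leaves implicit.
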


\begin{remark}\label{DeltaFreeRemark}
		For the case $m \rightarrow 0$ (no concentrated mass at the vertices) and $\kappa^{-1} \rightarrow 0$ (rigid vertices), for the zero potential, we have $\tr(\mathbb{M}) = \cos\big(\sqrt{\lambda/a}\big)$ and the dispersion relation becomes 
		\begin{equation}
		\label{eq:DG}
		\cos\big(\sqrt{\lambda/a}\big) = \pm \frac{|S(\theta_1,\theta_2)|}{3}.
		\end{equation}
		This is the result corresponding to the graphene Hamiltonian with zero potential; see Proposition 3.4 and Theorem 3.6 in \cite{KP07} for a detailed discussion.
\end{remark}

\begin{remark}\label{BorgFreeRemark}
    Figure \ref{fig:DeltaFunc} plots functions $\Delta(\lambda;\kappa,m,a)$ for selected parameters. As we can see in the first graph and \eqref{eq:DG}, the free graphene Hamiltonian has no non-degenerate spectral gaps, in other words all spectral bands are touching (but not overlapping). This property is characterized by having a constant potential for the one dimensional periodic Schr\"{o}dinger operator and is called the classical Borg's theorem. For $\cA_0$ we observe that even for the zero potential there exist non-degenerate spectral gaps if one of the parameters $\kappa^{-1}$ or $m$ is non-zero.  
\end{remark}

\begin{figure}[h]
            \centering
		\includegraphics[width=1.05\textwidth]{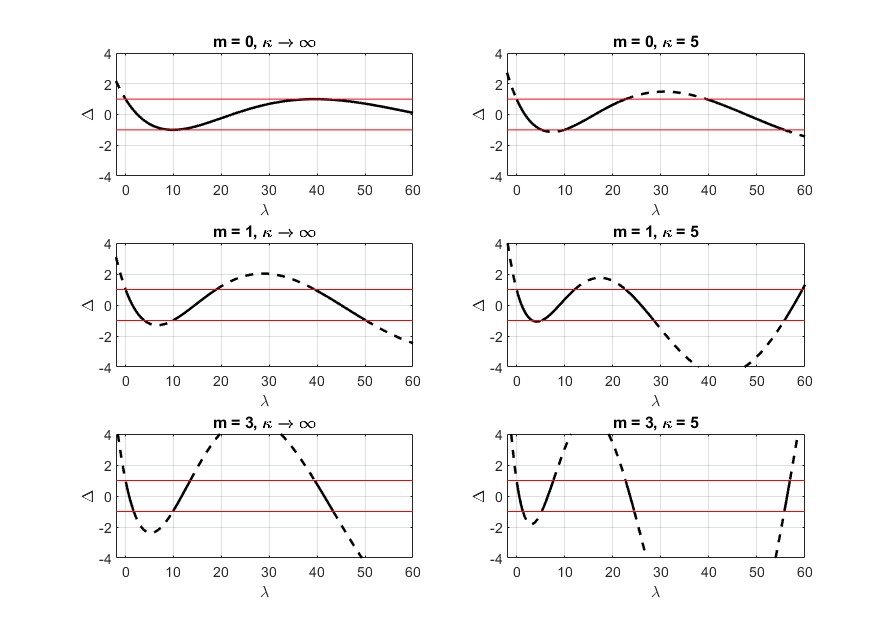}
		\caption{Plots of $\Delta$ as a function of $\lambda$ for $a=1$, composed of functions $T_1$ and $T_2$ as defined in \eqref{eq:DeltaFunctionT1T2}. Preimages of the solid curves give the spectral bands.}
		\label{fig:DeltaFunc}
	\end{figure}

Before  we represent the Dirac points and the endpoints of the spectral bands, we need a technical lemma.

\begin{lem}\label{D1D0identity}
For the free operator $\cA_0$ we have the identity
\begin{equation}
   [\cD_1(\lambda)]^2 = 1 - \sqrt{\lambda/a}\sin\Big(\sqrt{\lambda/a}\Big)\cD_0(\lambda).
\end{equation}
\end{lem}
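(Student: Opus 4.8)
The plan is to strip the claimed identity down, by elementary algebra, to a single trigonometric identity and then to verify that identity directly. Throughout I write $\mu:=\sqrt{\lambda/a}$ and $b:=a\kappa^{-1}$, so that \eqref{D1free}, \eqref{D2free} and \eqref{D0free} become
\begin{equation*}
\cD_1(\lambda)=\cos\mu-b\mu\sin\mu,\qquad \cD_2(\lambda)=b\cos\mu+\mu^{-1}\sin\mu,\qquad \cD_0(\lambda)=\cD_2(\lambda)+b\,\cD_1(\lambda).
\end{equation*}
Two remarks drive the reduction. First, the formula for $\cD_1$ says precisely that $b\mu\sin\mu=\cos\mu-\cD_1(\lambda)$. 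Second, $\sqrt{\lambda/a}\sin(\sqrt{\lambda/a})=\mu\sin\mu$. Using $\cD_0=\cD_2+b\cD_1$ and then substituting $b\mu\sin\mu=\cos\mu-\cD_1$,
\begin{align*}
1-\mu\sin\mu\,\cD_0(\lambda)&=1-\mu\sin\mu\,\cD_2(\lambda)-\big(b\mu\sin\mu\big)\cD_1(\lambda)\\
&=1-\mu\sin\mu\,\cD_2(\lambda)-\cD_1(\lambda)\big(\cos\mu-\cD_1(\lambda)\big)\\
&=[\cD_1(\lambda)]^2+\Big(1-\cD_1(\lambda)\cos\mu-\cD_2(\lambda)\,\mu\sin\mu\Big).
\end{align*}
Hence the lemma is equivalent to the reduced identity $\cD_1(\lambda)\cos\mu+\cD_2(\lambda)\,\mu\sin\mu=1$.

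To finish, I would verify this reduced identity directly. Substituting the explicit forms of $\cD_1$ and $\cD_2$,
\begin{align*}
\cD_1(\lambda)\cos\mu+\cD_2(\lambda)\,\mu\sin\mu
&=\big(\cos\mu-b\mu\sin\mu\big)\cos\mu+\big(b\cos\mu+\mu^{-1}\sin\mu\big)\mu\sin\mu\\
&=\cos^2\mu-b\mu\sin\mu\cos\mu+b\mu\sin\mu\cos\mu+\sin^2\mu\\
&=\cos^2\mu+\sin^2\mu=1,
\end{align*}
the two $b$-terms cancelling. Equivalently, one can note that this reduced identity is just $\cM_{1}[c_\lambda]\,\cD_1(\lambda)-\cM_{1}[c_\lambda']\,\cD_2(\lambda)=1$ (for the free operator $\cM_{1}[c_\lambda]=\cos\mu$ and $\cM_{1}[c_\lambda']=-\mu\sin\mu$), which in turn follows from $\det\mathbb{M}=1$ together with the symmetry relation $\cM_{1}[c_\lambda]=\cM_{1}[s_\lambda']$ that holds for the symmetric potential $q\equiv 0$.

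I do not expect any genuine obstacle here: the only real content is the bookkeeping that converts the quadratic quantity $[\cD_1(\lambda)]^2$ into the bilinear expression $\cD_1(\lambda)\cos\mu+\cD_2(\lambda)\,\mu\sin\mu$ by extracting the factor $b\mu\sin\mu=\cos\mu-\cD_1(\lambda)$, after which the Pythagorean identity closes the argument. The single point worth a remark is the value $\lambda=0$ (i.e. $\mu=0$): all the expressions involved extend analytically, since $\sin\mu/\mu\to1$ and $\mu\sin\mu\to0$, so the identity holds there by continuity (both sides equal $1$ at $\lambda=0$).
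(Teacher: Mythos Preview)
Your proof is correct. Both your argument and the paper's are direct trigonometric verifications hinging on the Pythagorean identity, but they are organized differently: the paper simply expands $[\cD_1(\lambda)]^2=(\cos\mu-a\kappa^{-1}\mu\sin\mu)^2$, replaces $\cos^2\mu$ by $1-\sin^2\mu$, and regroups the remaining terms to recognize $-\mu\sin\mu\,\cD_0(\lambda)$. You instead start from the right-hand side, use the structural relation $\cD_0=\cD_2+b\cD_1$ together with $b\mu\sin\mu=\cos\mu-\cD_1$ to reduce the claim to the bilinear identity $\cD_1\cos\mu+\cD_2\,\mu\sin\mu=1$, and then observe that this is nothing but $\det\mathbb{M}=1$ (combined with the symmetry $\cM_1[c_\lambda]=\cM_1[s_\lambda']$). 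Your route is marginally longer but buys a conceptual explanation of \emph{why} the identity holds---it is the unimodularity of the monodromy matrix in disguise---whereas the paper's computation is shorter but purely mechanical. Your continuity remark at $\lambda=0$ is also a small bonus the paper omits.
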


\begin{proof}
    Using \eqref{D1free} and \eqref{D0free} we get
    \begin{align}
        [\cD_1(\lambda)]^2 &= [\cos(\mu) - a\kappa^{-1}\mu\sin(\mu)]^2\\
        &= \cos^2(\mu) - 2a\kappa^{-1}\mu\sin(\mu)\cos(\mu) + (a\kappa^{-1})^2\mu^2\sin^2(\mu)\\
        &= 1 - \sin^2(\mu) - 2a\kappa^{-1}\mu\sin(\mu)\cos(\mu) + (a\kappa^{-1})^2\mu^2\sin^2(\mu)\\
        &= 1 + [-1 + (a\kappa^{-1})^2\mu^2]\sin^2(\mu) - 2a\kappa^{-1}\mu\sin(\mu)\cos(\mu)\\
        &= 1 - \mu\sin(\mu)\Big[[1 - (a\kappa^{-1})^2\mu^2]\frac{\sin(\mu)}{\mu} + 2a\kappa^{-1}\cos(\mu)\Big]\\
        &= 1 - \mu\sin(\mu)\cD_0(\lambda).
    \end{align}
\end{proof}

Next, using identities \eqref{D1free} and \eqref{D0free}, we represent the Dirac points in terms of $\cos(\mu)$ and $\sin(\mu)/\mu$.

\begin{prop}
		\label{cor:DiracPointsFree}
		The set of Dirac points of $\cA_0$ is given by 
		\begin{equation}
		\big\{(\theta_1,\theta_2,\lambda) \in \bR^3 ~:~ (\theta_1,\theta_2) = \pm (2\pi/3,-2\pi/3) \quad \text{ and }\quad \Delta(\lambda;\kappa,m,a) = 0 \big\}.
		\end{equation}
		In other words, the dispersion surface of $\cA_0$ has conical singularities at any $\lambda$ satisfying
		\begin{equation}\label{CorDispD1D2free}
		\Big[ 1 - 2\frac{m}{3}\frac{\lambda}{a}(a\kappa^{-1}) \Big] \cos\big(\sqrt{\lambda/a}\big) + \Big[ a\kappa^{-1} + \frac{m}{3}\frac{\lambda}{a} - \frac{m}{3}\big(\frac{\lambda}{a}\big)^2(a\kappa^{-1})^2 \Big]\frac{\sin(\sqrt{\lambda/a})}{\sqrt{\lambda/a}} = 0.
		\end{equation}
	\end{prop}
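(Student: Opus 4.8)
The plan is to specialize Proposition~\ref{cor:DiracPoints} to the zero-potential case and then to make the defining equation $\Delta(\lambda;\kappa,m,a)=0$ fully explicit. Since $\cA_0$ is the Hamiltonian $\cA$ with $q\equiv 0$ and the \emph{same} vertex conditions \eqref{eq:FBVertexCondEta1}, \eqref{eq:FBVertexCondEta2}, the proof of Proposition~\ref{cor:DiracPoints} applies verbatim: in the dispersion relation $\Delta(\lambda;\kappa,m,a)=\pm|S(\theta_1,\theta_2)|/3$, the right-hand side $\pm|S(\theta_1,\theta_2)|/3$ has non-degenerate conical minima precisely at $(\theta_1,\theta_2)=\pm(2\pi/3,-2\pi/3)$ (where $S=0$), while the left-hand side $\Delta(\lambda;\kappa,m,a)$ is built from the entries of the monodromy matrix $\mathbb{M}$, which is still entire in $\lambda$ when $q\equiv 0$. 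Hence the Dirac points of $\cA_0$ are exactly the triples with $(\theta_1,\theta_2)=\pm(2\pi/3,-2\pi/3)$ and $\Delta(\lambda;\kappa,m,a)=0$, and it only remains to rewrite $\Delta(\lambda;\kappa,m,a)=0$ in the desired closed form.

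For the second step I would start from the free-operator identity \eqref{Deltafree}, namely $\Delta(\lambda;\kappa,m,a)=\cD_1(\lambda)-\frac{m}{3}\mu^2\cD_0(\lambda)$ with $\mu=\sqrt{\lambda/a}$, and substitute the explicit formulas \eqref{D1free} for $\cD_1$ and \eqref{D0free} for $\cD_0$. Expanding and regrouping the result as a linear combination of the two fundamental solutions $\cos(\sqrt{\lambda/a})$ and $\sin(\sqrt{\lambda/a})/\sqrt{\lambda/a}$, the $\cos$-contributions come from the $\cos\mu$ term of $\cD_1$ and the $2a\kappa^{-1}\cos\mu$ term of $\cD_0$, giving the $\cos$-coefficient $1-2\tfrac{m}{3}\tfrac{\lambda}{a}(a\kappa^{-1})$; the $\sin(\cdot)/(\cdot)$-contributions come from the $a\kappa^{-1}\mu^2\,\tfrac{\sin\mu}{\mu}$ piece of $\cD_1$ and the $[1-(a\kappa^{-1})^2\mu^2]\tfrac{\sin\mu}{\mu}$ piece of $\cD_0$, which, after consistently replacing $\mu^2$ by $\lambda/a$, assemble into the bracket multiplying $\sin(\sqrt{\lambda/a})/\sqrt{\lambda/a}$ in \eqref{CorDispD1D2free}. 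Setting the resulting expression equal to zero produces the stated characterization.

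The computation in the second step is entirely mechanical; the only thing that requires attention is bookkeeping the powers of $\mu$, since $\mu^2=\lambda/a$ must be introduced consistently when passing between $\mu\sin\mu$ and $\tfrac{\sin\mu}{\mu}$ so that the final relation is expressed in the advertised pair of fundamental solutions. If a cross-check is wanted, the identity $[\cD_1(\lambda)]^2=1-\sqrt{\lambda/a}\,\sin(\sqrt{\lambda/a})\,\cD_0(\lambda)$ of Lemma~\ref{D1D0identity} can be used to eliminate one of the two trigonometric blocks, though it is not needed for the statement itself. I do not anticipate any genuine obstacle: the substance is Proposition~\ref{cor:DiracPoints} together with the explicit free-case expressions for $\cD_1$ and $\cD_0$.
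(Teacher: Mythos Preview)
Your proposal is correct and follows exactly the approach the paper takes: the paper does not give a separate formal proof of Proposition~\ref{cor:DiracPointsFree} but simply notes, just before the statement, that ``using identities \eqref{D1free} and \eqref{D0free}, we represent the Dirac points in terms of $\cos(\mu)$ and $\sin(\mu)/\mu$,'' i.e., one specializes Proposition~\ref{cor:DiracPoints} to $q\equiv 0$ and substitutes the explicit expressions for $\cD_1$ and $\cD_0$ into $\Delta=\cD_1-\tfrac{m}{3}\mu^2\cD_0$. Your remark that Lemma~\ref{D1D0identity} is available as a cross-check but not needed is also in line with the paper, which places that lemma before the proposition but does not invoke it in deriving \eqref{CorDispD1D2free}.
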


    Next, we focus on the endpoints of the spectral bands. From the dispersion relation we know that for such points we have 
    \begin{equation}\label{DispFree}
        \Delta(\lambda;\kappa,m,a) = \pm1 \quad \Longrightarrow \quad \cD_1(\lambda) - \frac{m}{3}\frac{\lambda}{a}\cD_0(\lambda) = \pm1.
    \end{equation}
    
    \begin{remark}\label{EigenvaluesBandenpointsFree}
    If $\lambda \in \Sigma_0$, then $\cD_0(\lambda) = 0$. Therefore, $[\cD_1(\lambda)]^2 = 1$ by Lemma \ref{D1D0identity}. However, this is exactly condition \eqref{DispFree}, so $\lambda$ is an endpoint of a spectral band of $\cA_0$. Therefore, we observed that any eigenvalue of $\cA_0$ is an embedded eigenvalue of $\cA_0$ and a boundary point of the spectrum of $\cA_0$.
    \end{remark}
    
    Finally, we characterize the endpoints of the spectral bands of $\cA_0$.
        \begin{lem}
		\label{SigmaEtaD1free}
		The set of endpoints of the spectral bands of $\cA_0$ is given by the set of $\lambda \in \mathbb{R}$ satisfying at least one of the following identities. 
        \begin{enumerate}
            \item $\lambda = 0$, 
            \item $\displaystyle\frac{\sin(\sqrt{\lambda/a})}{\sqrt{\lambda/a}} = 0$ and $m\kappa^{-1}=0$,
            \item $\displaystyle\frac{\sin(\sqrt{\lambda/a})}{\sqrt{\lambda/a}} = 0$ and $\displaystyle\frac{3}{\pi m a \kappa^{-1}} \in \mathbb{N}$,
            \item $\cD_0(\lambda) = 0$, that is 
            \begin{equation*}
                \Big[1 - (a\kappa^{-1})^2\displaystyle\frac{\lambda}{a}\Big]\displaystyle\frac{\sin(\sqrt{\lambda/a})}{\sqrt{\lambda/a}} + 2a\kappa^{-1}\cos(\sqrt{\lambda/a}) = 0,
            \end{equation*}
            \item $\displaystyle\frac{m}{3}\big[\cD_1(\lambda) - 1\big] + \frac{\sin(\sqrt{\lambda/a})}{\sqrt{\lambda/a}} = 0$ that is 
            \begin{equation*}
                \frac{m}{3}\cos(\sqrt{\lambda/a}) + \Big[1 - \frac{m}{3}\frac{\lambda}{a}a\kappa^{-1}\Big]\frac{\sin(\sqrt{\lambda/a})}{\sqrt{\lambda/a}} = \frac{m}{3},
            \end{equation*}
            \item $\displaystyle\frac{m}{3}\big[\cD_1(\lambda) + 1\big] + \frac{\sin(\sqrt{\lambda/a})}{\sqrt{\lambda/a}} = 0$ that is 
            \begin{equation*}
                \frac{m}{3}\cos(\sqrt{\lambda/a}) + \Big[1 - \frac{m}{3}\frac{\lambda}{a}a\kappa^{-1}\Big]\frac{\sin(\sqrt{\lambda/a})}{\sqrt{\lambda/a}} = -\frac{m}{3}.
            \end{equation*}
        \end{enumerate}
	\end{lem}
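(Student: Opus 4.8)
The plan is to reduce the claim to the scalar equation $\Delta(\lambda;\kappa,m,a)=\pm1$ and then to factor it by means of the quadratic identity of Lemma~\ref{D1D0identity}. By Theorem~\ref{thm:DispThmfree}(ii) the endpoints of the spectral bands of $\cA_0$ are the real $\lambda$ with $\Delta(\lambda;\kappa,m,a)=\pm1$, and, writing $\mu:=\sqrt{\lambda/a}$, one has $\Delta=\cD_1(\lambda)-\tfrac m3\mu^2\cD_0(\lambda)$ with $\cD_1,\cD_0$ as in \eqref{D1free} and \eqref{D0free}. So it suffices to determine the solution sets of $\Delta(\lambda)=1$ and of $\Delta(\lambda)=-1$ and to rewrite each resulting equation in the variables $\cos(\sqrt{\lambda/a})$ and $\sin(\sqrt{\lambda/a})/\sqrt{\lambda/a}$ via \eqref{D1free}, \eqref{D0free}.

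Fix $\varepsilon\in\{1,-1\}$ and rewrite $\Delta(\lambda)=\varepsilon$ as $\cD_1(\lambda)-\varepsilon=\tfrac m3\mu^2\cD_0(\lambda)$. Multiplying by $\cD_1(\lambda)+\varepsilon$ and inserting the identity $(\cD_1(\lambda)-1)(\cD_1(\lambda)+1)=-\mu\sin\mu\,\cD_0(\lambda)$ of Lemma~\ref{D1D0identity} (and $\varepsilon^2=1$) yields $\mu\,\cD_0(\lambda)\bigl[\sin\mu+\tfrac m3\mu(\cD_1(\lambda)+\varepsilon)\bigr]=0$. Hence one of the following holds: (a) $\mu=0$, i.e.\ $\lambda=0$, and a direct substitution gives $\Delta(0)=\cD_1(0)=1$, so $\lambda=0$ is always an endpoint — this is item~(1); (b) $\cD_0(\lambda)=0$, and then $\Delta=\cD_1$ with $\cD_1^2=1$ by Lemma~\ref{D1D0identity}, so $\Delta=\pm1$ automatically, which after substituting \eqref{D0free} is item~(4); (c) $\sin\mu+\tfrac m3\mu(\cD_1(\lambda)+\varepsilon)=0$, equivalently (for $\mu\neq0$) $\tfrac{\sin\mu}{\mu}+\tfrac m3(\cD_1(\lambda)+\varepsilon)=0$, and substituting $\cD_1=\cos\mu-a\kappa^{-1}\mu^2\frac{\sin\mu}{\mu}$ from \eqref{D1free} and taking $\varepsilon=-1$, resp.\ $\varepsilon=1$, gives exactly the explicit identities in items~(5) and~(6).

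Next I would pin down items~(2)--(3) and the reverse inclusion. Multiplying the relation in case~(c) back through with Lemma~\ref{D1D0identity} gives $(\cD_1(\lambda)+\varepsilon)(\Delta(\lambda)-\varepsilon)=0$, so on that branch either $\Delta=\varepsilon$ already or $\cD_1(\lambda)=-\varepsilon$; the latter forces $\sin\mu=0$, whence $\cD_1=\cos\mu\in\{\pm1\}$, $\cD_0=2a\kappa^{-1}\cos\mu$, and $\Delta=\cos\mu\bigl(1-\tfrac{2m}{3}\tfrac\lambda a\,a\kappa^{-1}\bigr)$; this lies in $\{\pm1\}$ exactly when $m\kappa^{-1}=0$ (item~(2)) or $\tfrac m3\tfrac\lambda a\,a\kappa^{-1}=1$, and combining the last relation with $\sin(\sqrt{\lambda/a})=0$ produces the divisibility condition of item~(3). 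For the reverse inclusion one retraces these computations: along the branches (4)--(6) off the set $\sin(\sqrt{\lambda/a})=0$ the identities of Lemma~\ref{D1D0identity} force $\Delta(\lambda)=\pm1$, on $\sin(\sqrt{\lambda/a})=0$ the hypotheses of (2) and (3) are precisely what makes $\Delta(\lambda)=\pm1$, and $\Delta(0)=1$. Rewriting every equation via $c_\lambda(x)=\cos\mu$ and $s_\lambda(x)=\sin\mu/\mu$ then gives the six displayed identities.

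The step I expect to cause the most trouble is precisely the bookkeeping on the set $\sin(\sqrt{\lambda/a})=0$: there the factored identities of items~(5)--(6) hold identically (both sides reduce to $\pm m/3$), so they cannot by themselves decide whether such a $\lambda$ is a band edge, and one must instead fall back on the closed form $\Delta(\lambda)=\cos\mu\bigl(1-\tfrac{2m}{3}\tfrac\lambda a\,a\kappa^{-1}\bigr)$ and test equality with $\pm1$ — this is what forces the extra hypotheses $m\kappa^{-1}=0$ and the divisibility condition in items~(2) and~(3). The remainder is the routine substitution of $\cos\mu$ and $\sin\mu/\mu$ into $\cD_0,\cD_1,\Delta$ together with the algebraic simplifications already assembled in Lemma~\ref{D1D0identity} and in \eqref{D1free}--\eqref{Deltafree}.
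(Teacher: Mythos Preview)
Your approach is essentially the paper's: both reduce the band-edge condition $\Delta=\pm1$ to a factored equation via Lemma~\ref{D1D0identity}, then split into the cases $\lambda=0$, $\sin\mu=0$ (items (2)--(3)), and $\sin\mu\neq0$ (items (4)--(6)). The only real difference is the algebraic maneuver used to reach the factorization. The paper squares $\Delta=\pm1$ to $\Delta^2=1$, substitutes $\cD_1^2=1-\mu\sin\mu\,\cD_0$, and then (after a second use of Lemma~\ref{D1D0identity}) completes the square in $\tfrac{\sin\mu}{\mu}$ and $\tfrac m3\cD_1$ to obtain the three-factor product $\cD_0\cdot\bigl(\tfrac m3(\cD_1-1)+\tfrac{\sin\mu}{\mu}\bigr)\cdot\bigl(\tfrac m3(\cD_1+1)+\tfrac{\sin\mu}{\mu}\bigr)=0$; because squaring is already an equivalence, no reverse-inclusion check is needed there. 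You instead treat each sign $\varepsilon$ separately, multiply $\Delta-\varepsilon=0$ by $\cD_1+\varepsilon$, and apply Lemma~\ref{D1D0identity} once to land directly on $\mu\,\cD_0\bigl[\sin\mu+\tfrac m3\mu(\cD_1+\varepsilon)\bigr]=0$; this is shorter algebra but introduces the possibility of spurious roots where $\cD_1+\varepsilon=0$, which you then correctly chase down via the identity $(\cD_1+\varepsilon)(\Delta-\varepsilon)=-\mu\cD_0\bigl[\sin\mu+\tfrac m3\mu(\cD_1+\varepsilon)\bigr]$. Your final paragraph, flagging that items~(5)--(6) degenerate on $\sin\mu=0$ and that one must fall back on the closed form $\Delta=\cos\mu\bigl(1-\tfrac{2m}{3}\mu^2 a\kappa^{-1}\bigr)$ to isolate items~(2)--(3), is a point the paper handles only implicitly by assuming $\sin\mu/\mu\neq0$ when deriving (4)--(6); you are more explicit about this than the paper is.
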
 
    \begin{proof}
        Recall that $\mu$ denotes $\sqrt{\lambda/a}$. Then, from \eqref{DispFree} we know that $\lambda \in \mathbb{R}$ is an endpoint of a spectral band if and only if
        \begin{equation}\label{dispersionD0D1proof}
            \cD_1(\lambda) - \frac{m}{3}\frac{\lambda}{a}\cD_0(\lambda) = \pm1,
        \end{equation}
        equivalently
        \begin{equation}
            \Big[\cD_1(\lambda) - \frac{m}{3}\frac{\lambda}{a}\cD_0(\lambda)\Big]^2 = 1 \quad \Longrightarrow  \quad [\cD_1(\lambda)]^2 - 2\frac{m}{3}\frac{\lambda}{a}\cD_0(\lambda)\cD_1(\lambda) + \Big(\frac{m}{3}\Big)^2\Big(\frac{\lambda}{a}\Big)^2[\cD_0(\lambda)]^2 = 1.
        \end{equation}
        Therefore, by Lemma \ref{D1D0identity} we get
        \begin{align}
            1 &= [\cD_1(\lambda)]^2 - 2\frac{m}{3}\frac{\lambda}{a}\cD_0(\lambda)\cD_1(\lambda) + \Big(\frac{m}{3}\Big)^2\Big(\frac{\lambda}{a}\Big)^2[\cD_0(\lambda)]^2\\
            1 &= 1 - \mu\sin(\mu)\cD_0(\lambda) - 2\frac{m}{3}\mu^2\cD_0(\lambda)\cD_1(\lambda) + \frac{m^2}{9}\mu^4[\cD_0(\lambda)]^2\\
            0 &= -\mu\sin(\mu)\cD_0(\lambda) - 2\frac{m}{3}\mu^2\cD_0(\lambda)\cD_1(\lambda) + \frac{m^2}{9}\mu^4[\cD_0(\lambda)]^2 \label{D0D1lastidentity}
        \end{align}
        Now we start to consider the cases we stated in Lemma \ref{SigmaEtaD1free}.\\

        \textbf{\underline{Case (1):}} If $\lambda = 0$, that is $\mu = 0$, then \eqref{D0D1lastidentity} is satisfied.\\

        \textbf{\underline{Cases (2) and (3):}} If $\sin(\mu)/\mu = 0$, then $\cos(\mu) = \pm1$, so we have 2 subcases:
        \begin{itemize}
            \item If $\cos(\mu) = 1$, then $\mu^2 = 2\pi n$ for some $n \in \mathbb{N}$, so \eqref{dispersionD0D1proof} becomes
            \begin{equation}
                  \pm1 = 1 - \frac{m}{3} (2n)\pi 2a\kappa^{-1},
            \end{equation}
            that is
            \begin{equation}
                \frac{m}{3} (2n)\pi 2a\kappa^{-1} = 0 \qquad \Longrightarrow \qquad m = 0 \quad \text{or} \quad \kappa^{-1} = 0
            \end{equation}
            or
            \begin{equation}
                \frac{m}{3} (2n)\pi a\kappa^{-1} = 1 \qquad \Longrightarrow \qquad \frac{3}{\pi ma\kappa^{-1}} \in 2\mathbb{N}.
            \end{equation}

            \item Similarly, if $\cos(\mu) = -1$, then $\mu^2 = (2n+1)\pi$ for some $n \in \mathbb{N}$, so \eqref{dispersionD0D1proof} becomes
            \begin{equation}
                  \pm1 = 1 - \frac{m}{3} (2n+1)\pi 2a\kappa^{-1},
            \end{equation}

            \begin{equation}
                \frac{m}{3} (2n+1)\pi 2a\kappa^{-1} = 0 \qquad \Longrightarrow \qquad m = 0 \quad \text{or} \quad \kappa^{-1} = 0
            \end{equation}
            or
            \begin{equation}
                \frac{m}{3} (2n+1)\pi a\kappa^{-1} = 1 \qquad \Longrightarrow \qquad \frac{3}{\pi ma\kappa^{-1}} \in 2\mathbb{N} + 1.
            \end{equation}
        \end{itemize}
        \textbf{\underline{Cases (4), (5) and (6):}} For the remaining cases we assume $\sin(\mu)/\mu \neq 0$, so \eqref{D0D1lastidentity} becomes
        \begin{align}
            0 &= -\frac{\mu}{\sin(\mu)}\mu^2\cD_0(\lambda) \Big[\frac{\sin^2(\mu)}{\mu^2} + 2\frac{m}{3}\frac{\sin(\mu)}{\mu}\cD_1(\lambda) - \frac{m^2}{9}\underbrace{\mu\sin(\mu)\cD_0(\lambda)}_{=~1 - [\cD_1(\lambda)]^2}\Big]\\
            0 &= -\frac{\mu}{\sin(\mu)}\mu^2\cD_0(\lambda) \Big[-\frac{m}{9} + \frac{\sin^2(\mu)}{\mu^2} + 2\frac{m}{3}\frac{\sin(\mu)}{\mu}\cD_1(\lambda) + \frac{m^2}{9}[\cD_1(\lambda)]^2\Big]\\
            0 &= -\frac{\mu}{\sin(\mu)}\mu^2\cD_0(\lambda) \Big[-\Big(\frac{m}{3}\Big)^2 + \Big(\frac{m}{3}\cD_1(\lambda) + \frac{\sin(\mu)}{\mu}\Big)^2\Big] 
        \end{align}
        Then we have
        \begin{equation}\label{D0D1D1Identity}
            \cD_0(\lambda)  \Big(\frac{m}{3}\cD_1(\lambda) + \frac{\sin(\mu)}{\mu} - \frac{m}{3}\Big) \Big(\frac{m}{3}\cD_1(\lambda) + \frac{\sin(\mu)}{\mu} + \frac{m}{3}\Big) = 0
        \end{equation}
        or equivalently
        \begin{equation}
             \cD_0(\lambda) \Big(\frac{m}{3}\big[\cD_1(\lambda) - 1\big] + \frac{\sin(\sqrt{\lambda/a})}{\sqrt{\lambda/a}}\Big) \Big(\frac{m}{3}\big[\cD_1(\lambda) + 1\big] + \frac{\sin(\sqrt{\lambda/a})}{\sqrt{\lambda/a}}\Big) = 0,
        \end{equation}
        which is valid if case (4), (5) or (6) is valid. This concludes the proof.
    \end{proof}

\section{Acknowledgments}
     B.H. thanks the National Science Foundation for support through the DMS-2052519 grant.

	\bibliographystyle{abbrv}
	\bibliography{ref}

\end{document}